\def\@abssec#1{\vspace{.05in}\footnotesize \parindent .2in
{\bf #1. }\ignorespaces}
\newtheorem{theorem}{Theorem}[section]
\newtheorem{remark}[theorem]{Remark}
\def \Rm {\mathbb R}
\def \Cm {\mathbb C}
\def \Sm {\mathbb S}
\newcommand{\eps}{\varepsilon}
\newcommand{\dsum}{\displaystyle\sum}
\newcommand{\mF}{\mathcal F}
\newcommand{\mG}{\mathcal G}
\newcommand{\mH}{\mathcal H}
\newcommand{\mK}{\mathcal K}
\newcommand{\mP}{\mathcal P}
\newcommand{\mX}{\mathcal X}
\newcommand{\mY}{\mathcal Y}
\newcommand{\mJ}{{\mathfrak J}}
\newcommand{\cout}[1]{}
\title{Local Inversions in Ultrasound Modulated Optical Tomography}
\author{Guillaume Bal}
\address{Department of Applied Physics and Applied Mathematics, Columbia University, New York, NY 10027}
\email{gb2030@columbia.edu}
\thanks{G.B. was supported in part by  NSF grant DMS-1108608 and AFOSR Grant NSSEFF- FA9550-10-1-0194. S.M. was supported in part by NSF grant DMS-1108858.}
\author{Shari Moskow}
\address{Department of Mathematics, Drexel University}
\email{moskow@math.drexel.edu}
\begin{document}
\maketitle

\begin{abstract}
Ultrasound modulated optical tomography, also called acousto-optics tomography, is a hybrid imaging modality that aims to combine the high contrast of optical waves with the high resolution of ultrasound. We follow the model of the influence of ultrasound modulation on the light intensity measurements developed in \cite{BS-PRL-10}. We present sufficient conditions ensuring that the absorption and diffusion coefficients modeling light propagation can locally be uniquely and stably reconstructed from the corresponding available information. We present an iterative procedure to solve such a problem based on the analysis of linear elliptic systems of redundant partial differential equations.
\end{abstract}

\section{Introduction}

This paper concerns the reconstruction of the diffusion and absorption coefficients $\gamma$ and $\sigma$ for a model of light propagation in tissues given by  
\begin{equation}\begin{array}{ll}
\label{eq:diff0}-\nabla\cdot \gamma  \nabla u + \sigma u=0 \quad &\mbox{ in } X\\
u=f & \mbox{ on } \partial X,
\end{array}
\end{equation}
where $u(x)$ is the light intensity of diffuse photons propagating in a bounded open domain $X\subset\Rm^n$ for $n\geq2$ with Dirichlet boundary condition given by $f$ on (the sufficiently smooth) $\partial X$, the boundary of $X$. More precise models of boundary conditions can be used with no consequence on the results presented in this paper and so we present our result with Dirichlet conditions to simplify notation.

The reconstruction of $(\gamma,\sigma)$ from optical boundary measurements is known to be a severely ill-posed problem in many settings of light propagation and results in reconstructions with very poor resolution \cite{AS-IP-10,B-IP-09}. Ultrasound Modulated Optical Tomography (UMOT), also called Acousto-Optics Tomography (AOT), aims to combine the high contrast of optical coefficients observed in many diseases with the high resolution of ultrasound. We refer to \cite{AFRBG-OL-05,KLZZ-JOSA-97,W-JDM-04} for additional information on this hybrid modality. In this paper, we follow the model of ultrasound modulation of light intensities developed in \cite{BS-PRL-10}. In this model, the difference of light measurements with and without ultrasound modulation provides, to a first approximation, internal functionals of the unknown parameters of the form
\begin{equation}
\label{eq:H}
H(x) = \gamma(x) |\nabla u|^2(x) + \eta \sigma(x) u^2(x),\qquad x\in X,
\end{equation}
where $\eta$ is a known constant in the model. The internal functional $H(x)$ is parameterized by the boundary condition $f$ on $\partial X$. The objective of UMOT is to reconstruct the unknown parameters $(\gamma,\sigma)$ from knowledge of a minimum of functionals $H_j$ for $1\leq j\leq J$ corresponding to {\em well-chosen} boundary conditions $\{f_j\}_{1\leq j\leq J}$.

Many studies are devoted to the problem with $\sigma\equiv0$ above. This problem, which finds applications in ultrasound modulated electrical impedance tomography (UMEIT), also called acousto-electric tomography or impedance-acoustic tomography, is now well understood and we refer the reader to \cite{ABCTF-SIAP-08,B-UMEIT-12,BBMT-12,BNSS-JIIP-13,CFGK-SJIS-09,GS-SIAP-09,KK-AET-11,MB-aniso-13,MB-IP-12,MB-IPI-12} for a detailed account of such theories. Note that \cite{MB-aniso-13,MB-IP-12} analyze the reconstruction of tensor-valued (anisotropic) coefficients $\gamma$, which we do not consider in this paper for UMOT. The reconstruction of two coefficients with more measurements than proposed above is considered in \cite{ACDRT-SIAP-11}.

Pioneered by the work in \cite{CFGK-SJIS-09}, several papers \cite{B-UMEIT-12,BBMT-12,MB-aniso-13,MB-IP-12,MB-IPI-12} provide explicit reconstruction procedures for $\gamma$ (when $\sigma\equiv0$) in the setting of a highly redundant measurements corresponding to a large value of $J$. Such procedures do not extend to the reconstruction of $(\gamma,\sigma)$ from knowledge of $H_j$ in \eqref{eq:H} for $1\leq j\leq J$. Rather, we follow a standard method to solve the above nonlinear problem consisting of analyzing its linearization and introducing a standard fixed point iterative procedure provided the linearized operator is injective. Such a strategy was followed in the setting $\sigma\equiv0$ in, e.g., \cite{B-Irvine-12,KK-AET-11, KS-IP-12}. The papers \cite{B-Irvine-12,KS-IP-12} provide general strategies to solve similar problems. In this paper, we follow the method developed in \cite{B-Irvine-12} that recasts the UMOT inverse problem as an elliptic redundant system of partial differential equations for which we can apply the classical elliptic regularity results developed in \cite{ADN-CPAM-64,DN-CPAM-55,S-JSM-73}.

More precisely, we recast the UMOT inverse problem as a system of partial differential equations in section \ref{sec:model} and present some sufficient conditions so that the system be {\em elliptic}. Ellipticity provides an inversion procedure up to the possible existence of a finite dimensional kernel. This is not sufficient, and conditions for injectivity are given here in section \ref{sec:injectivity}. Once the linearized UMOT inverse problem is injective, standard fixed point iteration procedures recalled in section \ref{sec:nonlinear} may be applied to solve the nonlinear problem locally. The conditions for ellipticity and injectivity are shown to be satisfied for appropriate choices of the boundary conditions $\{f_j\}$ in section \ref{sec:cond}. The practically and pedagogically interesting case of constant backgrounds is presented in section \ref{sec:constant}.

The UMOT inverse problem is an example of a large class of hybrid inverse problems (also called coupled-physics or multi-wave inverse problems) that aim to combine one modality with high contrast with another modality with high resolution. For an incomplete list of books and reviews on this active field of research, we refer the reader to, e.g., \cite{A-Sp-08,AS-IP-12,B-IO-12,PS-IP-07,S-SP-2011}.

\section{System and ellipticity}
\label{sec:model}
 We consider the equation
 \begin{equation}\begin{array}{ll}
\label{eq:diff}-\nabla\cdot \gamma  \nabla u_j + \sigma u_j =0 \quad &\mbox{ in } X\\
u_j=f_j & \mbox{ on } \partial X,
\end{array}
\end{equation}
for $1\leq j\leq J$ with $J\geq2$ and availability of functionals of the form
\begin{equation}
\label{eq:Hj}
\gamma|\nabla u_j|^2 + \eta \sigma |u_j|^2 = H_{j}(x)\quad \mbox{ in } X , \qquad 1\leq j\leq J,
\end{equation}
for some fixed, known, constant $0\not=\eta\in\Rm$. The equations \eqref{eq:diff}-\eqref{eq:Hj} may be seen as a redundant $2J\times (2+J)$ system of nonlinear partial differential equations for the dependent variables $(\gamma,\sigma,\{u_j\}_{1\leq j\leq J})$. As indicated above, in the setting $\sigma\equiv0$, explicit reconstruction procedures for $(\gamma,\{u_j\})$ have been obtained in \cite{B-UMEIT-12,BBMT-12,CFGK-SJIS-09,MB-aniso-13,MB-IP-12,MB-IPI-12} when $J$ is sufficiently large. The extension of such methods to the reconstruction of  $(\gamma,\sigma)$ directly from knowledge of $\{H_j\}_{1\leq j\leq J}$ is not known at present. Moreover, as shown in \cite{B-Irvine-12} in the setting $\sigma\equiv0$, $\gamma$ can uniquely and stably be reconstructed from $\{H_j\}_{1\leq j\leq J}$ for a smaller value of $J$ than what is necessary in the available explicit reconstruction procedures. 

The method followed in \cite{B-Irvine-12}, which shares many similarities with that described in \cite{KS-IP-12} (the main difference being that \cite{KS-IP-12} considers the inversion of systems of pseudo-differential operators while \cite{B-Irvine-12} considers the inversion of larger, but differential, systems of operators), first consists of linearizing the nonlinear problem \eqref{eq:diff}-\eqref{eq:Hj}. Replacing
$\gamma\to\gamma+\delta\gamma$, $\sigma\to\sigma+\delta\sigma$, and $u_j\to u_j+\delta u_j$ as well as $H_j\to H_j+\delta H_j$ and considering the terms in \eqref{eq:Hj}-\eqref{eq:diff} that are linear in ``$\delta$", we calculate that
\begin{eqnarray}
\label{eq:dHj}
  |\nabla u_j|^2 \delta \gamma+ \eta|u_j|^2 \delta\sigma + 2\gamma \nabla u_j \cdot\nabla\delta u_j 
+2\eta\sigma_0  u_j \delta u_j&=&\delta H_{j} ,\\[2mm]
\label{eq:deltau}
  L\delta u_j - \nabla\cdot\delta\gamma \nabla u_j +  u_j \delta \sigma&=& 0 ,\qquad L := -\nabla\cdot\gamma\nabla + \sigma.
\end{eqnarray}
Moreover, we obtain that $\delta u_j=0$ on $\partial X$.
Let us introduce the notation
\begin{displaymath}
   F_j = \nabla u_j,\qquad\theta_j=\dfrac{\nabla u_j}{|\nabla u_j|},\qquad d_j=\dfrac{u_j}{|\nabla u_j|},
\end{displaymath}
and the operator
\begin{equation}
\label{eq:Mj}
  M_j v = 2\gamma F_j \cdot\nabla v+2\eta\sigma  u_j v.
\end{equation}
Then, applying $L$ to \eqref{eq:dHj}, $M_j$ to \eqref{eq:deltau}, and introducing the commutator $[L,M_j]=LM_j-M_jL$, we find that 
\begin{eqnarray}
\label{eq:dHj2}
  \Big(L |F_j|^2 \delta \gamma+M_j \nabla\cdot \delta\gamma F_j\Big) + \Big(L \eta|u_j|^2 \delta\sigma-M_j u_j\delta\sigma \Big) + [L,M_j] \delta u_j &=& L \delta H_{j} ,\\[2mm]
\label{eq:deltau2}
  L\delta u_j - \nabla\cdot\delta\gamma \nabla u_j +  u_j \delta \sigma&=& 0.
\end{eqnarray}
This is a linear system of equations for $v=(\delta\gamma,\delta\sigma,\{\delta u_j\}_{1\leq j\leq J})$, which may be recast as
\begin{equation}
\label{eq:PQJ}
A_J v= (P_J + Q_J) v = S_J,
\end{equation}
with $P_J$ a $2J\times (2+J)$ matrix of second-order operators given by  
\begin{equation}
\label{eq:PJ}
P_J = \left(\begin{matrix} \vdots & \vdots & \vdots & \vdots & \vdots\\-\gamma |F_j|^2 \Delta + 2\gamma F_j\otimes F_j : \nabla\otimes \nabla & \eta |u_j|^2 \Delta & \ldots & [L,M_j] & \ldots \\  0 & 0 & \ldots &-\gamma\Delta &\ldots \\ \vdots & \vdots & \vdots& \vdots & \vdots  \end{matrix}\right)
\end{equation}
and $Q_J$ a $2J\times (2+J)$ matrix of at most first-order differential operators, whose explicit expression we do not reproduce. The source $S_J$ is a $2J\times 1$ matrix with odd entries $2j-1$ given by $L\delta H_j$ and vanishing even entries. Here $a\otimes a$ for an $n-$vector $a$ is the rank-one matrix of components $a_ia_j$; and for two matrices $A$ and $B$, we denote $A:B={\rm Tr} (A^*B)$ the usual inner product with $A^*$, the Hermitian conjugate to $A$.

The lower-order term $Q_J$ has a complicated structure, which we hope not to analyze in detail. We thus look for conditions (on the boundary conditions $f_j$ and on the number of measurements $J$ ) which guarantee that $P_J$ is an elliptic operator. Let $p_J(x,\xi)$ be the (principal) symbol of $P_J$. The operator $P_J$ is said to be elliptic when for each $\xi\in \Sm^{n-1}$, the $2J\times (2+J)$ matrix $p_J(x,\xi)$ has full rank $2+J$.  The latter matrix is given by
\begin{equation}
\label{eq:pJ}
p_J(x,\xi) = \left(\begin{matrix} \vdots & \vdots & \vdots & \vdots & \vdots\\\gamma(x) |F_j|^2(x) |\xi|^2 - 2\gamma(x) \big(F_j(x)\cdot\xi\big)^2 & -\eta |u_j|^2(x) |\xi|^2 & \ldots & \check p_j(x,\xi) & \ldots \\  0 & 0 & \ldots &\gamma(x)|\xi|^2 &\ldots \\ \vdots & \vdots & \vdots& \vdots & \vdots  \end{matrix}\right),
\end{equation}
with $\check p_j(x,\xi)$ a quadratic form in $\xi$ whose explicit expression does not influence ellipticity since $\gamma(x)|\xi|^2$ is uniformly bounded from below by a constant times $|\xi|^2$, which equals $1$ for $\xi\in\Sm^{n-1}$. Therefore, $p_J(x,\xi)$ has full rank for a given $\xi\in\Sm^{n-1}$ if and only if the following $J\times 2$ matrix has full rank (i.e. rank equal to $2$):
\begin{equation}
\label{eq:hpJ}
\hat p_J(x,\xi) = \left(\begin{matrix} \vdots & \vdots\\ \gamma(x) \Big(|F_j|^2(x) |\xi|^2 - 2\big(F_j(x)\cdot\xi\big)^2\Big) & -\eta |u_j|^2(x) |\xi|^2 \\\vdots & \vdots \end{matrix}\right).
\end{equation}
We assume, for instance by imposing that $f_j$ is uniformly bounded from below by a positive constant, that $|u_j|$ is also 
 uniformly bounded from below by a positive constant. As a consequence, we observe that the above matrix is full rank if and only if the following matrix is full rank
 \begin{equation}
\label{eq:tpJ}
\tilde p_J(x,\xi) = \left(\begin{matrix} \vdots & \vdots\\ \Big(|\xi|^2 - 2(\theta_j\cdot\xi)^2\Big) & - |d_j|^2 |\xi|^2 \\\vdots & \vdots \end{matrix}\right),
\end{equation}
with the dependence in $x$ dropped to simplify notation. We define the quadratic (in $\xi$) forms
\begin{equation}
\label{eq:pj}
p_j(x,\xi) = |\xi|^2 - 2\theta_j(x)\otimes \theta_j(x) : \xi\otimes \xi= |\xi|^2 - 2(\theta_j(x)\cdot\xi)^2.
\end{equation}
Were we to be in the situation where the absorption coefficient $\sigma$ vanishes in the above equations, then $\tilde p_J$ is full rank (i.e. of rank $1$) provided that $\{p_j(x,\xi)=0,\quad 1\leq j\leq J\}$ implies that $\xi=0$. In other words, the light cones with direction $\theta_j$ intersect only at the point $\xi=0$; see \cite{B-Irvine-12} for an analysis of this problem in the linearized setting.

Here, in the general $\sigma$ case, we obtain that the matrix $\tilde p_J(x,\xi)$ is of rank two for all $\xi\in \Sm^{n-1}$ if and only if the quadratic forms
\begin{equation}
\label{eq:pjk}
p_{jk}(x,\xi) = |d_j|^2(x) p_k(x,\xi) - |d_k|^2(x) p_j(x,\xi),
\end{equation}
are such that 
\begin{equation}
\label{eq:ellcond}
\big\{p_{jk}(x,\xi)=0,\,\, 1\leq j<k\leq J\big\} \,\,\mbox{ implies that } \,\,\xi=0.
\end{equation}

Let us assume that the ellipticity condition \eqref{eq:ellcond} holds. This provides uniform ellipticity inside the open domain $X$. We now need to find boundary conditions that preserve the elliptic structure of $P_J$. Such conditions are called the Lopatinskii conditions, and under the assumption \eqref{eq:ellcond}, it is not difficult to show that they are satisfied for Dirichlet boundary conditions for $v$; see \cite{B-Irvine-12}. 

Thus, using the theories of elliptic systems described in \cite{ADN-CPAM-64,S-JSM-73} (see also \cite{B-Irvine-12}), we obtain results that we collect in the following theorem:
\begin{theorem}
\label{thm:ellipticity}
 Consider the system \eqref{eq:PQJ} augmented with boundary conditions $v=0$ on $\partial X$. We assume that $P_J$ is elliptic in the sense that \eqref{eq:ellcond} holds. Then the system admits a left parametrix $B_J$ such that
\begin{equation}
\label{eq:BAJ}
 v = B_J S_J + T_J v,
\end{equation}
where $T_J$ is a compact operator. More precisely, $T_J$ is a pseudo-differential operator of order $-1$ (of class $L^{-1}_{1,0}$ as defined in, e.g., \cite{GS-CUP-94}). The above expression may be extended by a parametrix that solves the above system up to a smoothing operator of arbitrary order. That is, we have
\begin{equation}\label{eq:BAJ2}
v =  \dsum_{q=0}^{m-1} T_J^q B_J S_J  + T_J^m v
\end{equation}
for  any $m$ where $T_J^m$ is a pseudo-differential operator of order $-m$, mapping functions (or distributions) in $H^s(X)$ to functions in $H^{s+m}(X)$.
Moreover, there exist constants $C$ and $C_2$ such that 
\begin{equation}
\label{eq:stabest2}
  \|(\delta\gamma,\delta\sigma)\|_{H^{s}(X;\Rm^{2})} + \|\delta u_j\|_{H^{s+1}(X;\Rm^{J})} \leq C \|\delta H_j\|_{H^{s}(X;\Rm^J)} +C_2 \|v\|_{L^2(X;\Rm^{2+J})}.
\end{equation} 
This estimate is optimal since $\|\delta H_j\|_{H^{s}(X;\Rm^J)}$ is bounded by a constant times the above right-hand side.
\end{theorem}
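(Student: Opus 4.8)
The plan is to invoke the Agmon–Douglis–Nirenberg / Solonnikov theory for overdetermined elliptic systems as developed in \cite{ADN-CPAM-64,S-JSM-73} and summarized for this type of problem in \cite{B-Irvine-12}. The starting point is that the ellipticity condition \eqref{eq:ellcond} was shown (in the discussion preceding the statement, and in \cite{B-Irvine-12}) to be exactly the statement that the principal symbol $p_J(x,\xi)$ has full column rank $2+J$ for every $\xi\in\Sm^{n-1}$, and that the Dirichlet condition $v=0$ on $\partial X$ satisfies the Lopatinskii (complementing) condition for this system. By the ADN theory for properly elliptic overdetermined systems with complementing boundary conditions, one constructs a matrix-valued pseudo-differential left parametrix $B_J$ for the operator $A_J$ with the boundary conditions incorporated; the composition $B_J A_J$ differs from the identity by a regularizing operator $T_J$ of order $-1$, i.e., $T_J\in L^{-1}_{1,0}$ in the notation of \cite{GS-CUP-94}. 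Applying $B_J$ to \eqref{eq:PQJ} then yields $v = B_J S_J + T_J v$, which is \eqref{eq:BAJ}; since $X$ is bounded and $T_J$ gains one derivative, $T_J$ is compact on the relevant Sobolev spaces by Rellich–Kondrachov.

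Next I would iterate this identity. Substituting $v = B_J S_J + T_J v$ into itself $m$ times gives $v = \sum_{q=0}^{m-1} T_J^q B_J S_J + T_J^m v$, which is \eqref{eq:BAJ2}. The point is that $T_J\in L^{-1}_{1,0}$ implies, by the composition calculus for pseudo-differential operators, that $T_J^m\in L^{-m}_{1,0}$; such an operator maps $H^s(X)$ continuously into $H^{s+m}(X)$. Thus $\sum_{q=0}^{m-1}T_J^q B_J$ is a parametrix for $A_J$ modulo a smoothing operator of order $m$, and since $m$ is arbitrary one obtains a parametrix modulo $C^\infty$-smoothing. The care needed here is the standard bookkeeping near $\partial X$: the operators $T_J^q$ must be understood as acting on functions supported in (a neighborhood of) $\overline X$ with the boundary condition built in, so that the pseudo-differential calculus applies up to the boundary; this is exactly the content of the ADN/Solonnikov construction and I would simply cite it.

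For the stability estimate \eqref{eq:stabest2}, I would apply \eqref{eq:BAJ} in the space $H^s$. The source $S_J$ has entries $L\delta H_j$ in its odd rows and $0$ in its even rows, and $B_J$ has order $0$ on the odd blocks in a way that maps into $H^{s+?}$ appropriately — more precisely, because the first-row operators in $P_J$ are second order in $(\delta\gamma,\delta\sigma)$ and the second-row operators are second order in $\delta u_j$ while $L\delta H_j$ is the $\delta H_j$ datum differentiated twice, the parametrix $B_J$ gives $(\delta\gamma,\delta\sigma)\in H^s$ and $\delta u_j\in H^{s+1}$ controlled by $\|\delta H_j\|_{H^s}$ plus the lower-order remainder $\|T_J v\|_{H^s}$, and $\|T_J v\|_{H^s}\le C\|v\|_{H^{s-1}}\le C\|v\|_{L^2}$ when $s\le 1$ (for larger $s$ one iterates \eqref{eq:BAJ2} to absorb the remainder, or keeps the $L^2$ bound as stated, which is the weakest form). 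This yields the left-hand side $\le C\|\delta H_j\|_{H^s}+C_2\|v\|_{L^2}$. Optimality in the reverse direction is immediate: from \eqref{eq:dHj} one reads off that $\delta H_j$ is a first-order differential expression in $\delta u_j$ with coefficients involving $\nabla u_j,u_j$ (smooth, once $\gamma,\sigma$ are smooth enough) plus zeroth-order terms in $\delta\gamma,\delta\sigma$, so $\|\delta H_j\|_{H^s}\le C(\|\delta u_j\|_{H^{s+1}}+\|(\delta\gamma,\delta\sigma)\|_{H^s})$, which is bounded by the right-hand side of \eqref{eq:stabest2}.

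The main obstacle is not any single hard estimate but the verification that the hypotheses of the ADN/Solonnikov theorem genuinely hold for the system \eqref{eq:PQJ}: namely that $P_J$ is not merely elliptic but \emph{properly} (overdetermined-) elliptic with a well-defined system of Douglis–Nirenberg indices making every entry of $P_J$ of the correct order (the rows come in two types — second order in the coefficient unknowns, second order in $\delta u_j$ — so the weights $s_i,t_k$ must be chosen as $s_i=0$ for all rows and $t_k=2$ for the $(\delta\gamma,\delta\sigma)$ columns and $t_k=2$ for the $\delta u_j$ columns, and one checks $[L,M_j]$ has order $\le 2$ consistently), and that the Dirichlet boundary operator $v=0$ complements $P_J$ in the Lopatinskii sense. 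All of this is carried out in \cite{B-Irvine-12} for the $\sigma\equiv0$ case and the extension to nonzero $\sigma$ changes only the lower-order operator $Q_J$ and the already-verified principal-symbol computation \eqref{eq:pJ}–\eqref{eq:ellcond}; I would therefore state that the proof is an application of \cite[Thm.~?]{ADN-CPAM-64,S-JSM-73} exactly as in \cite{B-Irvine-12}, and spend the proof text only on the indices bookkeeping and on deriving \eqref{eq:stabest2} and its optimality from \eqref{eq:BAJ}–\eqref{eq:BAJ2} and \eqref{eq:dHj}.
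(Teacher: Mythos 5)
Your parametrix and iteration arguments mirror the paper's: invoke ADN/Solonnikov elliptic theory with the Lopatinskii condition verified as in \cite{B-Irvine-12}, get a left parametrix with order $-1$ remainder, iterate to arbitrary smoothing order, and read off optimality from \eqref{eq:dHj}. That part is essentially identical to the proof in the paper.

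The gap is in the derivation of the sharper regularity for $\delta u_j$. You assign uniform Douglis--Nirenberg weights $s_i=0$, $t_k=2$, which is what \eqref{eq:PJ} presupposes, and under that scheme the ADN estimate gives $\|v\|_{H^{s+2}}\le C\|S_J\|_{H^s}+C_2\|v\|_{L^2}$ --- the \emph{same} Sobolev index for every component of $v$. You then assert, without justification, that ``the parametrix $B_J$ gives $(\delta\gamma,\delta\sigma)\in H^s$ and $\delta u_j\in H^{s+1}$.'' Nothing in the uniform-weight parametrix construction singles out the $\delta u_j$ columns for an extra derivative merely because the even rows of $S_J$ vanish; the parametrix entries $(B_J)_{k,2j}$ are not zero, and the symbolic left inverse of $p_J$ has a uniform order $-2$. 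The paper handles this by a two-step argument: first the non-optimal estimate \eqref{eq:stabest1} with equal orders (precisely the inequality your weights yield), and then a separate elliptic bootstrap applied to the decoupled scalar equation \eqref{eq:deltau2}, whose right-hand side $\nabla\cdot\delta\gamma\nabla u_j - u_j\delta\sigma$ lies in $H^{s-1}$ once $\delta\gamma\in H^s$, so second-order elliptic regularity for $L$ gives $\delta u_j\in H^{s+1}$. (The alternative, used in Remark \ref{rem:directestimate}, is to change the Douglis--Nirenberg weights to $s_{2j}=-1$, $t_k=3$ for the $\delta u_j$ columns, which makes the sharp estimate come out of ADN directly --- but that is a different weight choice than the one you wrote down.) You should either carry out the bootstrap from \eqref{eq:deltau2} explicitly, or switch to the non-uniform weights of the remark; as written, the step from \eqref{eq:stabest1} to \eqref{eq:stabest2} is missing.
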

\begin{proof}
Elliptic theory in \cite{ADN-CPAM-64,S-JSM-73} provides a left parametrix $B_J$ such that
\begin{equation}
\label{eq:BAJ3}
 B_J(P_J + Q_J) v  = (I-T_J) v = B_J S_J,
\end{equation}
where $T_J$ is a compact operator, and more precisely a pseudo-differential operator of order $-1$ (of class $L^{-1}_{1,0}$ as defined in, e.g., \cite{GS-CUP-94} when the coefficients are assumed to be sufficiently smooth). When the coefficients in the differential operators are sufficiently smooth, the perturbation $T_J$ may be replaced by an arbitrarily smoothing operator.  That is, for any $m$ we have
\begin{displaymath}
  (I- T_J^m) v = \dsum_{q=0}^{m-1} T_J^q B_J S_J,
\end{displaymath}
where $T_J^m$ a pseudo-differential operator of order $-m$, bounded from $H^s(X)$ to $H^{s+m}(X)$.
Moreover, standard elliptic regularity results \cite{ADN-CPAM-64,S-JSM-73} provide the following stability estimates
\begin{equation}
\label{eq:stabest1}
  \|v\|_{H^{s+2}(X;\Rm^{2+J})} \leq C \|S_J\|_{H^{s}(X;\Rm^{2J})} \leq C \|\delta H_j\|_{H^{s+2}(X;\Rm^J)} +C_2 \|v\|_{L^2(X;\Rm^{2+J})}.
\end{equation} 
These estimates are not optimal as the source term in \eqref{eq:deltau2} vanishes. From standard elliptic regularity for the latter equation, we obtain that $\delta u_j$ is one derivative smoother than $\delta \gamma$ and hence we obtain \eqref{eq:stabest2}. That the latter estimate is optimal is clear from \eqref{eq:dHj}.
\end{proof}

\begin{remark}
\label{rem:directestimate} \rm
The estimate \eqref{eq:stabest2} can be obtained directly from \eqref{eq:dHj2}-\eqref{eq:deltau2} by introducing an elliptic system in the sense of Douglis and Nirenberg \cite{DN-CPAM-55}.  Assigning the weights $s_{2j-1}=0$ to \eqref{eq:dHj2} and $s_{2j}=-1$ to \eqref{eq:deltau2}, and the weights $t_1=t_2=2$ for the columns for $(\delta\gamma,\delta\sigma)$ and the weights $t_j=3$ for $3\leq j\leq 2+J$ for the columns for $\{\delta u_j\}$, we find that the leading term in \eqref{eq:dHj2}-\eqref{eq:deltau2} is
\begin{equation}
\label{eq:tPJ}
\tilde P_J = \left(\begin{matrix} \vdots & \vdots & \vdots & \vdots & \vdots\\-\gamma |F_j|^2 \Delta + 2\gamma F_j\otimes F_j : \nabla\otimes \nabla & \eta |u_j|^2 \Delta & \ldots & 0 & \ldots \\  -F_j\cdot\nabla  & 0 & \ldots &-\gamma\Delta &\ldots \\ \vdots & \vdots & \vdots& \vdots & \vdots  \end{matrix}\right).
\end{equation}
This operator is elliptic if and only if $P_J$ is elliptic and standard weighted elliptic regularity estimates \cite{ADN-CPAM-64,S-JSM-73}  then directly provide \eqref{eq:stabest2}; we refer to \cite{B-Irvine-12} for additional details, which we do not reproduce here.
\end{remark}
\begin{remark}
\label{rem:directestimate2} \rm
The same estimates may in fact be established at a lower cost in terms of the number of redundant boundary conditions. Indeed, consider the problem \eqref{eq:dHj}-\eqref{eq:deltau} directly. Then we can also write this system in the sense of Douglis and Nirenberg assigning the weights $s_{2j-1}=0$ to \eqref{eq:dHj} and $s_{2j}=-1$ to \eqref{eq:deltau}, as well as the weights $t_1=t_2=0$ for the columns for $(\delta\gamma,\delta\sigma)$ and the weights $t_j=1$ for $3\leq j\leq 2+J$ for the columns for $\{\delta u_j\}$. For such weights, we would find that the leading term is of the form
\begin{equation}
\label{eq:tPJ}
{\mP}_J = \left(\begin{matrix} \vdots & \vdots & \vdots & \vdots & \vdots\\ |F_j|^2   & \eta |u_j|^2 & \ldots & 2\gamma F_j\cdot\nabla  & \ldots \\  -F_j\cdot\nabla  & 0 & \ldots &-\gamma\Delta &\ldots \\ \vdots & \vdots & \vdots& \vdots & \vdots  \end{matrix}\right).
\end{equation}
It is not difficult to verify that the symbol of this operator (replacing $\nabla$ by $i\xi$ and $\Delta$ by $-|\xi|^2$) is injective if and only if $P_J$ above is elliptic. Moreover, we verify as in the case $\sigma\equiv0$ in \cite{B-Irvine-12} that under the same conditions of ellipticity, then the Lopatinskii conditions are satisfied provided that $\delta u_j=0$; with no conditions necessary for $(\delta\sigma,\delta\gamma)$ on $\partial X$. Elliptic regularity then again provides a result of the form \eqref{eq:stabest2} with the last term replaced by $C_2\|\delta u_j\|_{L^2(X;\Rm^{J})}$. In some sense, this latter result is more favorable and shows that the application of $L$ to \eqref{eq:dHj} above requires that we introduce additional boundary conditions for $(\delta\sigma,\delta\gamma)$ on $\partial X$. Since the injectivity results of the following section can only be established for systems of the form \eqref{eq:dHj2}-\eqref{eq:deltau2}, we decided to directly present the latter in detail. Nonetheless, the following more precise version of Theorem \ref{thm:ellipticity} holds:
\begin{theorem}
\label{thm:ellipticity2}
Consider the system  \eqref{eq:dHj}-\eqref{eq:deltau}  with Dirichlet conditions $\delta u_j=0$ on $\partial X$. Then under the same ellipticity condition \eqref{eq:ellcond} as in Theorem \ref{thm:ellipticity}, the existence of parametrices for $v$ solution of \eqref{eq:dHj}-\eqref{eq:deltau} as described in \eqref{eq:BAJ} and \eqref{eq:BAJ2} still holds. Moreover, we have the (optimal) stability estimate
\begin{equation}
\label{eq:stabest3}
  \|(\delta\gamma,\delta\sigma)\|_{H^{s}(X;\Rm^{2})} + \|\delta u_j\|_{H^{s+1}(X;\Rm^{J})} \leq C \|\delta H_j\|_{H^{s}(X;\Rm^J)} +C_2 \|\delta u_j\|_{L^2(X;\Rm^{J})}.
\end{equation} 
\end{theorem}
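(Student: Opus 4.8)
The plan is to prove Theorem~\ref{thm:ellipticity2} by regarding \eqref{eq:dHj}-\eqref{eq:deltau} as an overdetermined boundary value problem that is elliptic in the sense of Douglis and Nirenberg: we assign column weight $0$ to the (effectively algebraic) unknowns $(\delta\gamma,\delta\sigma)$ and column weight $1$ to each $\delta u_j$, with row weights chosen so that the principal part of \eqref{eq:dHj}-\eqref{eq:deltau} is the operator $\mP_J$ of Remark~\ref{rem:directestimate2}. The first step is to confirm that the principal symbol $\mP_J(x,\xi)$ — obtained by replacing $\nabla$ by $i\xi$ and $\Delta$ by $-|\xi|^2$ — is injective on $\Cm^{2+J}$ for every $x\in\overline X$ and every $0\neq\xi\in\Rm^n$, and that this is precisely the ellipticity condition \eqref{eq:ellcond}. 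This is the same finite-dimensional reduction carried out in Section~\ref{sec:model}: the rows coming from \eqref{eq:deltau} read $-i(F_j\cdot\xi)\,\delta\gamma+\gamma|\xi|^2\,\delta u_j=0$ and, since $\gamma|\xi|^2$ is bounded below on $\Sm^{n-1}$, determine each $\delta u_j$ in terms of $\delta\gamma$; substituting into the rows coming from \eqref{eq:dHj} leaves, after the same rescalings used to pass from \eqref{eq:hpJ} to \eqref{eq:tpJ}, exactly the matrix $\tilde p_J(x,\xi)$, whose rank equals $2$ for all $\xi\in\Sm^{n-1}$ if and only if \eqref{eq:ellcond} holds.

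The second step is to verify the complementing (Lopatinskii--Shapiro) condition for the boundary map $v\mapsto(\delta u_1,\dots,\delta u_J)\big|_{\partial X}$, with no condition imposed on $(\delta\gamma,\delta\sigma)$. Freezing the coefficients at a boundary point and flattening $\partial X$, one must show that for every tangential frequency $\xi'\neq0$ the only bounded solution on $\{x_n>0\}$ of the homogeneous constant-coefficient system with principal part $\mP_J$ satisfying $\delta u_j\big|_{x_n=0}=0$ is the trivial one. The mode count is consistent with the weights: the entry $-\gamma\Delta\,\delta u_j$ forces each $\delta u_j$ to obey a (perturbed) second-order elliptic equation with a single decaying normal mode $e^{-|\xi'|x_n}$, matched by its single Dirichlet condition, while $(\delta\gamma,\delta\sigma)$, carrying column weight zero, enter only through zeroth-order terms and the first-order couplings $F_j\cdot\nabla$, contribute no decaying modes, and therefore require no boundary condition. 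I expect this to be the point requiring the most care, but the verification is identical to the one performed in \cite{B-Irvine-12} for $\sigma\equiv0$, since the terms carried by $\sigma$ are of lower order and do not enter the principal symbol.

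With ellipticity and the complementing condition in hand, the theory of \cite{ADN-CPAM-64, S-JSM-73} furnishes a left parametrix $B_J$ with $B_J(\text{system})v=(I-T_J)v=B_JS_J$, where $T_J\in L^{-1}_{1,0}$ is compact and, under the smoothness assumptions on the coefficients, can be iterated to $(I-T_J^m)v=\sum_{q=0}^{m-1}T_J^qB_JS_J$ with $T_J^m$ of order $-m$; this is \eqref{eq:BAJ}-\eqref{eq:BAJ2}. The accompanying weighted a priori estimate reads $\|(\delta\gamma,\delta\sigma)\|_{H^{s}}+\|\delta u_j\|_{H^{s+1}}\le C\|\delta H_j\|_{H^{s}}+C_2\|v\|_{L^2}$, since the source $S_J$ carries $\delta H_j$ in the rows of weight $0$ and vanishes in the remaining rows. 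The final step is to upgrade $C_2\|v\|_{L^2}$ to $C_2\|\delta u_j\|_{L^2}$: because the $(\delta\gamma,\delta\sigma)$-block of the system is of order zero, the variables $(\delta\gamma,\delta\sigma)$ are recovered from $\{\delta u_j\}$ and $\{\delta H_j\}$ through \eqref{eq:dHj} up to the rank defect of the matrix $(|F_j|^2,\eta|u_j|^2)_j$, and when that rank drops to one (i.e. when $|d_j|^2$ is independent of $j$) the first-order block \eqref{eq:deltau} compensates, since \eqref{eq:ellcond} then forces $\{\theta_j\}$ to span $\Rm^n$. This yields $\|(\delta\gamma,\delta\sigma)\|_{L^2}\le C\|\delta H_j\|_{L^2}+C\|\delta u_j\|_{H^1}$ (modulo a weaker norm of $(\delta\gamma,\delta\sigma)$ absorbed by interpolation, exactly as in \cite{B-Irvine-12}); interpolating $\|\delta u_j\|_{H^1}$ between $\|\delta u_j\|_{H^{s+1}}$ and $\|\delta u_j\|_{L^2}$ and absorbing the term $\|\delta u_j\|_{H^{s+1}}$ into the left-hand side then gives \eqref{eq:stabest3}. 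Optimality is immediate, since \eqref{eq:dHj} bounds $\|\delta H_j\|_{H^{s}}$ by a constant times the left-hand side of \eqref{eq:stabest3}.
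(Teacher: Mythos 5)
Your proposal follows the paper's route for the bulk of the argument: you assign the same Douglis--Nirenberg row weights $s_{2j-1}=0$, $s_{2j}=-1$ and column weights $t_1=t_2=0$, $t_j=1$ ($j\ge3$), identify the principal part $\mP_J$ of \eqref{eq:tPJ}, reduce injectivity of its symbol to the full-rank condition on $\tilde p_J$ (hence to \eqref{eq:ellcond}) by solving the $\delta u_j$-rows and substituting into the $\delta H_j$-rows, verify the complementing condition for Dirichlet data on $\delta u_j$ with no conditions on $(\delta\gamma,\delta\sigma)$, and invoke \cite{ADN-CPAM-64,S-JSM-73}. That is exactly what the paper does in Remark~\ref{rem:directestimate2}, and the part up to and including the parametrix statements \eqref{eq:BAJ}--\eqref{eq:BAJ2} is fine.

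You diverge at the last step, and that is where there is a gap. The paper asserts that, because the boundary conditions are imposed on $\delta u_j$ alone and $(\delta\gamma,\delta\sigma)$ carry column weight zero, the Douglis--Nirenberg a priori estimate itself places the compact remainder only in the $\delta u_j$ slot, i.e.\ one obtains $C_2\|\delta u_j\|_{L^2}$ directly. You instead first write the weaker estimate with $C_2\|v\|_{L^2}$ and then try to eliminate $\|(\delta\gamma,\delta\sigma)\|_{L^2}$ algebraically from \eqref{eq:dHj}--\eqref{eq:deltau}. That algebraic upgrade does not close in the degenerate case $|d_j|^2$ independent of $j$ (which is precisely the constant-background situation of Section~\ref{sec:constant}): \eqref{eq:dHj} then controls only the single combination $|F_j|^2\delta\gamma+\eta|u_j|^2\delta\sigma$, and \eqref{eq:deltau} controls only $F_j\cdot\nabla\delta\gamma$ modulo zeroth-order terms in $(\delta\gamma,\delta\sigma)$; with no boundary or mean-value condition on $\delta\gamma$ you cannot pass from $\nabla\delta\gamma$ back to $\delta\gamma$ in $L^2$, so the inequality $\|(\delta\gamma,\delta\sigma)\|_{L^2}\lesssim\|\delta H_j\|_{L^2}+\|\delta u_j\|_{H^1}$ that your interpolation step needs is not established. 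The correct source of the improved remainder is not a pointwise consequence of \eqref{eq:dHj}--\eqref{eq:deltau} but a structural feature of the weighted parametrix: the zero-weight unknowns enter the principal symbol algebraically, are eliminated in the parametrix construction, and therefore never appear in the compact perturbation. If you want to keep your two-step presentation, you would need to make this elimination explicit at the symbolic level rather than argue at the level of the equations themselves.
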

\end{remark}

\section{Generic injectivity}
\label{sec:injectivity}
The presence of the constant $C_2$ in the preceding estimates comes from the fact that the operator $P_J+Q_J$ augmented with Dirichlet conditions need not be injective. This is reminiscent of the possible lack of invertibility of $-\Delta +V$ for a given potential $V$. Elliptic theory provides a parametrix $\Delta_D^{-1}$ (inversion of the Laplace operator with Dirichlet boundary conditions) so that $-\Delta +V$ is replaced by $I-\Delta_D^{-1} V$, with $\Delta_D^{-1} V$ a compact operator as $T_J$ above. However, we cannot guarantee that $1$ is not an eigenvalue of $T_J$ in general unless we have a precise understanding of the lower-order term $Q_J$. 

We present here a general methodology to ensure generic injectivity of an operator of the form $P_J+Q_J$ at the cost of imposing additional boundary conditions. Again, consider the scalar problem $Av :=(-\Delta +V)v=0$ with prescribed Dirichlet conditions on $\partial X$. Let us replace the above system by $A^*Av=0$, which is a fourth-order system, with its {\em own} Dirichlet conditions, which in this case would correspond to prescribing $v$ and $\partial_\nu v$ on $\partial X$. By integrations by parts, we obtain that $A^*Av=0$ along with $v=\partial_\nu v=0$ on $\partial X$ implies that $Av=0$ with the same boundary conditions. When $V$ is real-analytic, then the Holmgren theory for Cauchy problems can be invoked to show that $v=0$, yielding injectivity for the non homogeneous problem. 

If $A$ is injective, then so is $A+\delta V$ for $\delta V$ sufficiently small since $A^{-1}\delta V$ is of spectral radius strictly less than $1$ for $\delta V$ sufficiently small (in an appropriate topology and for an open set that depends on $V$). As a consequence, we obtain that there is a dense open set of potentials $V$ (in that same appropriate topology) such that $A+V$ is injective. This property (valid on a dense open set) is called generic injectivity.

When $V$ is not analytic, then a considerably more delicate Unique Continuation Property (UCP) for $A$ shows again that $V=0$. We do not consider UCP here and refer to \cite{B-Irvine-12} for the analysis of the case $\sigma\equiv0$.

Coming back to the UMOT problem, \eqref{eq:PQJ} says that  $A_Jv=S_J$, which we can modify as
\begin{equation}
\label{eq:normalAJ}
A^*_J A_J v = A^*_J S_J\mbox{ in } X,\qquad v=0 \qquad \partial_\nu v=g \mbox{ on } \partial X.
\end{equation}
Since we prescribe Dirichlet data in (\ref{eq:diff0}) to be the same as that of the background solutions, $v=0$ on the boundary. However, the data for $\partial_\nu u_j$ is then determined by these solutions of (\ref{eq:diff0}), and therefore $\partial_\nu \delta u_j$  is nonzero in general. Nonetheless, we do assume this is known boundary data and represent it as $g$  in (\ref{eq:normalAJ}). 
\begin{theorem}
\label{thm:holmgren}[Holmgren]
Let us assume that all coefficients $(\gamma,\sigma,\{u_j\})$ are real-analytic so that $A^*_J A_J$ is an operator with real-analytic coefficients.  Assume moreover that $P_J$ is elliptic in the sense that \eqref{eq:ellcond} holds. Then $A^*_J A_J$ and $A_J$ augmented with the boundary conditions $v=\partial_\nu v=0$ on $\partial X$ are injective operators. Moreover, the problem (\ref{eq:normalAJ}) has a unique solution satisfying 
\begin{equation} \label{eq:normalAJest}  \| v\|_{H^s(X,\mathbb{R}^{2+J})} \leq C\left(\| \delta H_j\|_{H^s(X,\mathbb{R}^J) } + \| g\|_{H^{s-3/2}(\partial X;\mathbb{R}^{2+J})}    \right).  \end{equation} 
\end{theorem}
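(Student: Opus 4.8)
The plan is to combine the elliptic parametrix construction of Theorem~\ref{thm:ellipticity} (applied to the augmented fourth-order system) with Holmgren's uniqueness theorem to upgrade the Fredholm-type estimate \eqref{eq:stabest2} into genuine injectivity and a clean a priori bound. First I would observe that $A_J^*A_J$ is a redundant system whose principal part is $P_J^*P_J$; since $P_J$ is elliptic in the sense that \eqref{eq:ellcond} holds, the symbol $p_J(x,\xi)$ has full column rank $2+J$ for every $\xi\in\Sm^{n-1}$, hence $p_J^*(x,\xi)p_J(x,\xi)$ is an invertible $(2+J)\times(2+J)$ matrix for every such $\xi$. Thus $A_J^*A_J$ is a (square, properly elliptic) system of order $4$, and the natural boundary conditions attached to the normal equations are exactly the Dirichlet data $v=\partial_\nu v=0$ on $\partial X$; these satisfy the Lopatinskii--Shapiro conditions since, as recalled after \eqref{eq:ellcond}, Dirichlet conditions for $v$ are complementing for $P_J$, and doubling the system doubles the boundary conditions in the expected way (cf.\ \cite{ADN-CPAM-64,B-Irvine-12}). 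Consequently the elliptic theory of \cite{ADN-CPAM-64,S-JSM-73} applies and yields a left parametrix for $A_J^*A_J$ with the Dirichlet-type boundary conditions, giving the parametrix identities \eqref{eq:BAJ}--\eqref{eq:BAJ2} and an a priori estimate of the form
\begin{equation}\label{eq:apriori-prop}
\|v\|_{H^s(X;\Rm^{2+J})}\le C\big(\|\delta H_j\|_{H^s(X;\Rm^J)}+\|g\|_{H^{s-3/2}(\partial X;\Rm^{2+J})}\big)+C_2\|v\|_{L^2(X;\Rm^{2+J})},
\end{equation}
the boundary term entering with the Sobolev trace loss $3/2$ appropriate for a fourth-order operator and Dirichlet--Neumann data.

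The second step is to remove the $C_2\|v\|_{L^2}$ term, i.e.\ to prove injectivity. Suppose $\delta H_j=0$ and $g=0$, so $A_J^*A_J v=0$ in $X$ with $v=\partial_\nu v=0$ on $\partial X$. Extend $v$ by zero across $\partial X$ to a slightly larger domain; because the Cauchy data $(v,\partial_\nu v)$ vanish on $\partial X$ and $A_J^*A_J$ has real-analytic coefficients, Holmgren's uniqueness theorem forces $v\equiv0$ in a neighborhood of $\partial X$, and then the connectedness of $X$ together with the unique continuation implicit in Holmgren propagates $v\equiv0$ throughout $X$. (One should verify that $\partial X$ is noncharacteristic for $A_J^*A_J$, which again follows from ellipticity: the principal symbol $p_J^*(x,\nu)p_J(x,\nu)$ is invertible for the conormal $\nu$.) Alternatively, and more in the spirit of the discussion preceding the theorem, I would first integrate by parts: $0=\langle A_J^*A_Jv,v\rangle_{L^2}=\|A_Jv\|_{L^2}^2$ using the vanishing boundary terms, so $A_Jv=0$ with $v=0$ and $\partial_\nu v=0$ on $\partial X$; then Holmgren applied to the lower-order system $A_Jv=0$ (which also has real-analytic coefficients and is elliptic, hence has noncharacteristic boundary) again gives $v\equiv0$. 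Either route establishes injectivity of both $A_J^*A_J$ and $A_J$ on the space of functions with vanishing Cauchy data.

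With injectivity in hand, the standard functional-analytic argument converts \eqref{eq:apriori-prop} into \eqref{eq:normalAJest}: the parametrix identity $v=B_JS_J'+T_Jv$ (where $S_J'$ denotes the combined interior source $A_J^*S_J$ and boundary data $g$, and $T_J$ is compact) together with injectivity of $I-T_J$ — which holds because any element of its kernel would be a smooth solution of the homogeneous problem, hence zero by the previous step — shows $I-T_J$ is invertible, and absorbing $C_2\|v\|_{L^2}$ by a compactness-and-contradiction argument removes that term from \eqref{eq:apriori-prop}, yielding \eqref{eq:normalAJest} and existence of a (unique) solution for arbitrary data by Fredholm theory for the square elliptic system $A_J^*A_J$. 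The step I expect to be the main obstacle is the careful verification that the Lopatinskii conditions hold for $A_J^*A_J$ with the Dirichlet--Neumann boundary conditions $v=\partial_\nu v=0$: while it is morally "obvious" that squaring an elliptic boundary value problem with complementing Dirichlet data produces another well-posed elliptic problem, making this precise within the Agmon--Douglis--Nirenberg / Solonnikov weighted framework for a \emph{non-square, redundant} operator $P_J$ requires identifying the correct Douglis--Nirenberg weights for the doubled system and checking that the boundary operator is complementing with respect to those weights; I would handle this by passing to the equivalent square system $P_J^*P_J$, for which the weights are the standard ones, and citing the corresponding computation in \cite{B-Irvine-12} for the case $\sigma\equiv0$, noting that the presence of $\sigma$ affects only lower-order terms and hence neither ellipticity nor the Lopatinskii condition.
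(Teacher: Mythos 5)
Your proposal follows the same strategy as the paper: establish the elliptic a~priori estimate for the fourth-order system, prove injectivity via Holmgren for real-analytic coefficients, and then use injectivity to absorb the compact remainder $C_2\|v\|_{L^2}$. Two small remarks. First, your ``first route'' for injectivity---extending $v$ by zero and applying Holmgren directly to $A_J^*A_J$---is incomplete as stated: $A_J^*A_J$ is fourth order, so Holmgren's theorem across the non-characteristic surface $\partial X$ requires vanishing of Cauchy data of order three ($v,\partial_\nu v,\partial_\nu^2 v,\partial_\nu^3 v$), whereas you only have $v=\partial_\nu v=0$. Your ``alternative'' route is the correct one and is what the paper implicitly relies on: integrate by parts to obtain $A_J v=0$, which is a \emph{second-order} system, so the two vanishing Cauchy data suffice for Holmgren applied to $A_J$. (In the constant-coefficient Section~6 the authors spell out the equivalent bootstrapping argument, using ellipticity of $A_J$ to deduce $\partial_\nu^2 v=0$ and higher before invoking Holmgren.) Second, the paper handles the inhomogeneous boundary data $g$ by lifting it to the unique biharmonic $\phi$ with $\phi=0,\ \partial_\nu\phi=g$ on $\partial X$, applying the elliptic estimate \eqref{eq:stabest1} to $v-\phi$, and then bounding $\|\phi\|_{H^s}$ by $\|g\|_{H^{s-3/2}(\partial X)}$; you instead assert the a~priori estimate with the trace-loss $3/2$ directly. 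These are equivalent in content, but the paper's route makes the source of the exponent $s-3/2$ explicit. Otherwise your Lopatinskii discussion and the removal of the $C_2$ term match what the paper does.
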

\begin{proof}
 Injectivity is a direct consequence of Holmgren's theory for systems of operators as presented, e.g., in \cite{H-I-SP-83} (see also \cite[Thm 3.3]{B-Irvine-12}).  The function $g\in H^{s-3/2}(\partial X,\mathbb{R}^{2+J})$ can be lifted to the unique $H^s(X,\mathbb{R}^{2+J})$ function $\phi$ such that 
 \begin{equation}\label{liftphi} 
 \Delta\Delta\phi=0\ \  \mbox{in} \ \ X; \ \  \phi=0,\ \partial_\nu \phi =g\ \  \mbox{on}  \ \ \partial X. 
 \end{equation}  By ellipticity, we can apply (\ref{eq:stabest1})  to $v-\phi$ to obtain 
 \begin{equation}   \| v\|_{H^s(X,\mathbb{R}^{2+J})} \leq C\left(\| \delta H_j\|_{H^s(X,\mathbb{R}^J) } + \|\phi\|_{H^{s}( X;\mathbb{R}^{2+J})} \right) + C_2 \| v\|_{L^2(X;\mathbb{R}^{2+J})} \end{equation}   
 and injectivity precisely means that $C_2=0$ up to the choice of a possibly larger constant $C$.  We also know that
 \begin{equation}\label{sbiharmest} \| \phi\| _{H^s(X,\mathbb{R}^{J+2})}\leq C \| g\|_{H^{s-3/2}(\partial X, \mathbb{R}^{J+2})} \end{equation}
from which the result follows. 
\end{proof}

With this result, we obtain a generic injectivity statement for the linearized UMOT problem.
\begin{theorem}
\label{thm:geninj}[Generic Injectivity]
 There is a dense open set of coefficients $(\gamma,\sigma)$ (in any topology of sufficiently smooth coefficients) such that for $J\geq5$ ($J\geq3$ in dimension $n=2$) and for an open set of boundary conditions $\{f_j\}_{1\leq j\leq J}$, we have that $A^*_J A_J$ and $A_J$ augmented with the boundary conditions $v=\partial_\nu v=0$ on $\partial X$ are injective operators, and that the problem (\ref{eq:normalAJ}) has a unique solution.
\end{theorem}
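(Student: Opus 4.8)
The plan is to combine three ingredients: (i) the Holmgren injectivity result of Theorem \ref{thm:holmgren}, which requires both ellipticity \eqref{eq:ellcond} and real-analyticity of the coefficients; (ii) a perturbation argument showing that injectivity, once established for one analytic pair $(\gamma,\sigma)$ together with one family $\{f_j\}$, persists under small perturbations of the coefficients; and (iii) an explicit construction (to be completed in section \ref{sec:cond}, or at least referenced from there) showing that for $J\geq 5$ (resp. $J\geq 3$ when $n=2$) one can choose real-analytic $(\gamma,\sigma)$ and boundary conditions $\{f_j\}$ so that the ellipticity condition \eqref{eq:ellcond} holds. Granting (iii), the argument is essentially the scalar template spelled out in section \ref{sec:injectivity} applied to the system $A_J^*A_J$.

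First I would fix an analytic background $(\gamma_0,\sigma_0)$ and analytic boundary data $\{f_j\}$ for which \eqref{eq:ellcond} is satisfied at every $x\in \bar X$; by Theorem \ref{thm:holmgren} the operators $A_J$ and $A_J^*A_J$ augmented with $v=\partial_\nu v=0$ on $\partial X$ are then injective, and \eqref{eq:normalAJ} is uniquely solvable with the estimate \eqref{eq:normalAJest}. Next, observe that both the ellipticity condition \eqref{eq:ellcond} and the injectivity of $A_J^*A_J$ are stable under perturbation. Ellipticity is an open condition: the symbol rank in \eqref{eq:tpJ} depends continuously on $(\theta_j,d_j)$, which in turn depend continuously (in, say, $C^k$) on $(\gamma,\sigma)$ through the elliptic problem \eqref{eq:diff}, so \eqref{eq:ellcond} survives on a neighborhood. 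For injectivity, I would use the parametrix from Theorem \ref{thm:ellipticity}: write $A_J^*A_J$ with its Dirichlet-type conditions as $I - T$ with $T$ compact (of order $-1$); injectivity at the background means $1\notin \mathrm{spec}(T_0)$, and since $(\gamma,\sigma)\mapsto T$ is continuous in operator norm $L^2\to L^2$ on the elliptic neighborhood, $1\notin\mathrm{spec}(T)$ persists on a (possibly smaller) neighborhood. This yields a nonempty open set of coefficients, in the chosen $C^k$ topology, on which injectivity holds; because the analytic pair $(\gamma_0,\sigma_0)$ lies in its closure and analytic coefficients are dense, and because the construction in (iii) can be localized to give such analytic pairs near any prescribed smooth target, the set of good coefficients is in fact dense. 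Intersecting over a countable exhaustion (or simply noting that density plus openness is what is claimed) gives the dense open set in the statement; the open set of boundary conditions $\{f_j\}$ is obtained the same way, since \eqref{eq:ellcond} and the spectral condition on $T$ also depend continuously on the $f_j$.

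The main obstacle is ingredient (iii): producing, for the stated thresholds $J\geq 5$ ($J\geq 3$ if $n=2$), explicit analytic coefficients and boundary data realizing \eqref{eq:ellcond} — i.e. arranging that the only common zero of the quadratic forms $p_{jk}(x,\xi)$ in \eqref{eq:pjk} is $\xi=0$, uniformly in $x$. This is a genuine construction about families of light cones (the zero sets of the $p_j$ in \eqref{eq:pj}) twisted by the ratios $|d_j|^2/|d_k|^2$, and it is precisely the content of section \ref{sec:cond}; here I would simply invoke that section (and, for the constant-coefficient intuition, section \ref{sec:constant}) to supply one admissible analytic configuration, after which the soft perturbation/density argument above does the rest. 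A secondary, more routine point is checking that the perturbation of $(\gamma,\sigma)$ only needs to be small in a topology strong enough to control $\nabla u_j$ (hence $\theta_j$, $d_j$, and the coefficients of $P_J,Q_J$) but that this is still weak enough for analytic coefficients to be dense — standard elliptic regularity for \eqref{eq:diff} handles this, exactly as in the $\sigma\equiv 0$ case treated in \cite{B-Irvine-12}.
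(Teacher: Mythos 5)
Your proposal follows essentially the same route as the paper: start from an arbitrary real-analytic $(\gamma,\sigma)$, invoke Theorems \ref{thm:cgond} and \ref{thm:ell2d} to produce boundary conditions $\{f_j\}$ (chosen real-analytic by density) for which $P_J$ is elliptic, apply the Holmgren result of Theorem \ref{thm:holmgren} to get injectivity of $A_J$ and $A_J^*A_J$, and then conclude by the openness of both the ellipticity condition and injectivity under $C^1$-small perturbations of $(\gamma,\sigma)$ and the $f_j$, with density supplied by density of real-analytic coefficients. The extra detail you give on the parametrix/spectral mechanism behind the openness of injectivity is a correct unpacking of what the paper states more tersely.
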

\begin{proof}
 Let $(\gamma,\sigma)$ be real-analytic. Then the results of Theorems \ref{thm:cgond} and \ref{thm:ell2d} below show the existence of an open set (in the $C^1(\partial X)$ topology, say) of boundary conditions $\{f_j\}$ such that $P_J$ is elliptic. By density, we choose the boundary conditions $\{f_j\}$  to be real-analytic. As a consequence, the solutions $u_j$ are real-analytic and $P_J$ is an elliptic system of second-order operators.

 We may then apply Theorem \ref{thm:holmgren} to obtain that $A^*_J A_J$ and $A_J$ are injective. Now, injectivity of $A^*_J A_J$ and $A_J$ still holds for $(\tilde\gamma,\tilde\sigma)$ sufficiently close to $(\gamma,\sigma)$ (for instance in the $C^1(X)$ topology) and for $\tilde f_j$ close to $f_j$ (for instance in the $C^1(\partial X)$ topology).
\end{proof}

The above construction thus shows that for appropriate choices of boundary conditions, the operator $A_J$ is injective for an open, dense, set of coefficients $(\gamma,\sigma)$. Note that an open dense set may in fact be very far from accounting for all possible coefficients $(\gamma,\sigma)$. But this result still indicates that injectivity of $A_J$ and $A_J^*A_J$ does hold for a large class of coefficients.

\medskip

Another injectivity result shows that when $P_J$ is elliptic, then $A^*_J A_J$ in \eqref{eq:normalAJ} is injective on sufficiently small open domains $\Omega\ni0$. For $x\in X$, we say that $x\in\eps\Omega$ for $0<\eps\leq1$ if $\eps^{-1}x\in\Omega$. The size of the domain depends on the coefficients in $P_J$, but this results gives yet another indication that the UMOT problem is well posed in several configurations of interest.

\begin{theorem} \label{thm:small domains}[Injectivity for Small Domains] Let $\Omega$ be a subset of $X$ with $0\in\eps\Omega$ for all $0<\eps\leq\eps_0$.  Assume that the highest order part of the system $P_J$ given by (\ref{eq:PJ}) is elliptic in the sense that \eqref{eq:ellcond} holds. 
Then there exists $\epsilon_1> 0$ such that if $\epsilon<\epsilon_1 $, the  linearized system for $ v = ( \delta\gamma, \delta\sigma, \delta u_1,\ldots \delta u_J )^T$
\begin{displaymath}
 A_J^* A_J v = A_J^* S_J, \mbox{ on } \epsilon\Omega
\end{displaymath}
$$ v=0\ \ {\partial v\over{\partial \nu}}= g \mbox{ on } \ \partial(\epsilon \Omega) $$
has a unique solution $v $ satisfying 
\begin{equation}
\label{eq:regulwsmall}
  \| v \|_{H^2_0(\epsilon\Omega;\Rm^{J+2})}  \leq C_1 \|\delta H\|_{H^2(\epsilon\Omega;\Rm^{J})} + C_\epsilon  \| g \|_{H^{1/2}(\partial (\epsilon\Omega);\Rm^{J+2})}\end{equation}
  where $C_1$ is independent of $\epsilon$. 
\end{theorem}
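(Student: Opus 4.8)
The plan is to reduce the small-domain statement to a rescaled version of the elliptic estimate \eqref{eq:stabest1} together with a perturbative argument showing that the lower-order term $Q_J$ and the possible kernel of $A_J^*A_J$ become negligible once the domain is small enough. First I would rescale: set $y=\eps^{-1}x$, so that $\eps\Omega$ becomes the fixed domain $\Omega$, and track how each operator in $P_J+Q_J$ transforms. Under $x\mapsto\eps y$, a second-order operator picks up a factor $\eps^{-2}$ and a first-order operator a factor $\eps^{-1}$; hence the rescaled operator $\eps^2 A_J$ has principal part $P_J(y,\partial_y)$ (with frozen-at-scale coefficients, which are close to their values at $0$ since $\eps$ is small and the coefficients are continuous) plus an $O(\eps)$ first-order perturbation coming from $\eps^2 Q_J$. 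Similarly $\eps^4 A_J^*A_J$ is, to leading order, the elliptic fourth-order operator $P_J^*P_J$ on $\Omega$ plus a lower-order remainder of size $O(\eps)$. The ellipticity of $P_J$ in the sense of \eqref{eq:ellcond} is scale-invariant and, by continuity of the coefficients, holds uniformly near $0$, so the leading rescaled operator is uniformly elliptic on $\Omega$ with constants independent of $\eps$.

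Next I would invoke the Agmon–Douglis–Nirenberg / Solonnikov estimate \eqref{eq:stabest1} for the fixed elliptic system on the fixed domain $\Omega$ with the Dirichlet-type conditions $v=\partial_\nu v=0$ coming from the lifted boundary data (as in the proof of Theorem~\ref{thm:holmgren}): for the leading operator one has
\begin{equation*}
  \|v\|_{H^2(\Omega)} \leq C\big(\|\text{source}\|_{L^2(\Omega)} + \|g\|_{H^{1/2}(\partial\Omega)}\big) + C_2\|v\|_{L^2(\Omega)},
\end{equation*}
with $C,C_2$ independent of $\eps$. The $O(\eps)$ lower-order perturbation from $\eps^2 Q_J$ (resp. from the remainder in $\eps^4 A_J^*A_J$) is bounded, as a map into $L^2(\Omega)$, by $C\eps\|v\|_{H^1(\Omega)}$, which by interpolation and Young's inequality is absorbed into the left side once $\eps$ is small, at the cost of an extra $C_\eps\|v\|_{L^2(\Omega)}$ term. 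It remains to kill the $\|v\|_{L^2}$ term. Here I would use a Poincaré-type inequality on $\Omega$: since $v\in H^2_0(\Omega)$ (both $v$ and $\partial_\nu v$ vanish on $\partial\Omega$, and we may further subtract the harmonic lift $\phi$ of $g$ as in Theorem~\ref{thm:holmgren} so that the unknown has zero Cauchy data), one has $\|v\|_{L^2(\Omega)}\le C\|v\|_{H^2(\Omega)}$, but more importantly the rescaled equation gives $\|v\|_{L^2}$ controlled by $\eps^2$ times an $H^2$-norm plus data, so for $\eps$ small the $C_2\|v\|_{L^2}$ contribution on the right is also absorbable. Unwinding the scaling then yields \eqref{eq:regulwsmall} with $C_1$ independent of $\eps$ and $C_\eps$ (carrying the $\eps$-dependent Sobolev constants of the trace lift on $\partial(\eps\Omega)$) depending on $\eps$. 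Uniqueness follows because the homogeneous problem, after the same absorption argument with zero data, forces $v=0$.

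The main obstacle I expect is the careful bookkeeping of the scaling for the \emph{composed} operator $A_J^*A_J$: one must check that the adjoint is taken with respect to the correct (rescaled) inner product and that the lower-order terms of $A_J^*A_J$ — which include cross terms between the principal part of $A_J$ and the first-order part of $Q_J$ — all carry a genuine positive power of $\eps$ after rescaling, so that none of them competes with the principal symbol $P_J^*P_J$. A secondary technical point is justifying that the ADN constants for the fixed problem on $\Omega$ can be taken uniform as the (frozen) coefficients range over a small neighborhood of their values at $0$; this is standard but should be stated, since it is what makes $C_1$ independent of $\eps$. Once these two points are in place, the absorption of the $O(\eps)$ and $C_2\|v\|_{L^2}$ terms is routine.
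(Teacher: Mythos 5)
Your outline correctly identifies the two ingredients that make the small-domain argument work --- coefficients are nearly constant as $\eps\to0$, and Poincar\'e gives an $\eps$-gain on small domains --- but the route you propose has a genuine gap at the point where you try to kill the $C_2\|v\|_{L^2}$ remainder from the Agmon--Douglis--Nirenberg estimate. After rescaling $y=\eps^{-1}x$ so the problem lives on the \emph{fixed} domain $\Omega$, the Poincar\'e inequality reads $\|\tilde v\|_{L^2(\Omega)}\leq C\|\tilde v\|_{H^2_0(\Omega)}$ with no $\eps$-gain: the gain $\|v\|_{L^2(\eps\Omega)}\leq C\eps^2\|v\|_{H^2_0(\eps\Omega)}$ is a property of the shrinking domain $\eps\Omega$ and disappears once you change variables to $\Omega$. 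So the $C_2\|\tilde v\|_{L^2(\Omega)}$ term in the ADN estimate on $\Omega$ cannot be absorbed by making $\eps$ small. (Note also that the inequality you write, $\|v\|_{L^2(\Omega)}\leq C\|v\|_{H^2(\Omega)}$, is just the trivial Sobolev inclusion and points in the wrong direction for an absorption argument.) If instead you try to work directly on $\eps\Omega$, where the scaled Poincar\'e does give an $\eps^2$ gain, you then need the ADN constant $C_2$ for the fourth-order normal problem to be \emph{uniform} over the shrinking family $\eps\Omega$ --- but establishing that uniformity is precisely the hard quantitative point, and is what the ADN route was supposed to let you avoid.

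The paper sidesteps this entirely by never invoking the ADN/Solonnikov estimate with its $\|v\|_{L^2}$ remainder. It freezes the coefficients of $A_J=P_J+Q_J$ at $x=0$, obtaining a constant-coefficient operator $A_J(0)$, and proves coercivity of the principal part by Plancherel: extending $w\in H^2_0(\eps\Omega)$ by zero, ellipticity and degree-$2$ homogeneity give $\int|P_J(0)w|^2\geq\alpha\|w\|^2_{H^2_0(\eps\Omega)}$ with $\alpha$ independent of $\eps$. One then integrates the normal equation $A_J(0)^*A_J(0)w=B^*f$ against $w$, obtaining the energy identity $\int|A_J(0)w|^2=\int f\,Bw$, and uses the small-domain Poincar\'e inequalities $\|w\|_{H^2(\eps\Omega)}\leq D\|w\|_{H^2_0(\eps\Omega)}$ and $\|w\|_{H^1(\eps\Omega)}\leq D\eps\|w\|_{H^2_0(\eps\Omega)}$ to absorb the first-order $Q_J(0)$ cross terms, yielding the explicit $\eps$-uniform bound $\|(A_J(0)^*A_J(0))^{-1}B^*\|_{L^2\to H^2_0}\leq CD\|B\|/(\alpha-CD^2\eps)$. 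The variable-coefficient operator is recovered by a Neumann-series perturbation in $T=A_J-A_J(0)$, which is small from $H^2(\eps\Omega)$ to $L^2(\eps\Omega)$ because the coefficients are continuous and the domain shrinks. The clean way to repair your argument is to replace the ADN step by this Plancherel/energy coercivity for the constant-coefficient principal part, so that no $\|v\|_{L^2}$ remainder ever appears.
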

{\it Proof. }\ \ Recall that $A_J=P_J + Q_J$ where all terms of $Q_J$ are at most first order. We let  $A_J(0)=P_J(0) +Q_J(0)$ be  the operator obtained by freezing all coefficients $(\gamma,\sigma,\{u_j\},\{\nabla u_j\})$ at $x=0$. Then $ A_J= A_J(0) + \big( A_J- A_J(0) \big)$   and we set the remainder $ T=   \big( A_J- A_J(0) \big)$. 
Let us lift $g$ to the unique $H^2$ function $\phi$ defined by (\ref{liftphi}). Then we have that
\begin{equation}\label{biharmest} \| \phi\| _{H^2(\epsilon\Omega)}\leq C_\epsilon \| g\|_{H^{1/2}(\partial(\epsilon\Omega))} \end{equation}
where $C_\epsilon$ may depend on $\epsilon$. Let us subtract off $\phi$ so that $w=v-\phi$ solves
\begin{displaymath}
 A_J^* A_J w = A_J^*( S_J + A_J\phi), \mbox{ on } \epsilon\Omega
\end{displaymath}
$$ w=0\ \ {\partial w\over{\partial \nu}}= 0 \mbox{ on } \ \partial(\epsilon \Omega). $$

Note that $P_J(0)$ also satisfies the ellipticity condition (\ref{eq:ellcond}) by assumption, and is homogeneous of degree $2$. Since the coefficients of $P_J(0)$ are all constant, it is easy to see by using Plancherel's theorem that 
\begin{equation} \label{coercive} \int_{\epsilon\Omega} \| P_J(0) w \|^2 \geq \alpha \| w \|_{H^2_0(\epsilon \Omega)} ,\end{equation} 
where $\alpha$ does not depend on $\epsilon$.  
Suppose $$A_J^*(0)A_J(0) w=B^*f$$ for $w\in H^2_0(\epsilon \Omega)$ where $B$ is a second order differential operator. Then $B$ is bounded independently of $\epsilon$  from $H^2$ to $L^2$ by $\| B\|$.   Integration by parts yields
$$ \int_{\epsilon\Omega} A_J(0) w\cdot A_J(0) w = \int_{\epsilon\Omega} f B w$$
from which we can compute
\begin{eqnarray} \int_\Omega P_J(0) w\cdot P_J(0) w &=& -2\int_\Omega Q_J(0) w \cdot P_J(0) w - \int_\Omega Q_J(0) w\cdot Q_J(0) w + \int_\Omega f Bw \\
&\leq& C \| w\|_{H^1(\epsilon\Omega)} \| w \|_{H^2(\epsilon\Omega)}  + \| B\| \| f\|_{L^2(\epsilon\Omega)} \| w\|_{H^2(\epsilon\Omega)} \label{pweq} \end{eqnarray}
where we use the full $H^1$ and $H^2$ norms here. Given the Poincar\'e inequality for the fixed domain $\Omega$ and $w\in H^2_0(\Omega)$,
$$  \|  w\|_{H^2(\Omega)} \leq D \| w \|_{H^2_0(\Omega)} ,$$
we can deduce from scaling the following Poincar\'e inequalities for the small domain $\epsilon \Omega$ 
\begin{equation}\label{poincare1}  \| w \|_{H^2(\epsilon\Omega)} \leq D \| w \|_{H^2_0(\epsilon\Omega)} ,\end{equation}
and 
\begin{equation} \label{poincare2}  \| w \|_{H^1(\epsilon\Omega)} \leq D\epsilon \| w \|_{H^2_0(\epsilon\Omega)} \end{equation}
for all $w\in H^2_0(\epsilon\Omega)$.  Applying (\ref{poincare1}) and (\ref{coercive}) on the left hand side of (\ref{pweq}) yields
$$ \alpha \| w\|^2_{H^2_0(\epsilon\Omega)} \leq {CD} \left( \| w\|_{H^1(\epsilon\Omega)}\| w\|_{H^2_0(\epsilon\Omega)} + \| B\| \| f\|_{L^2(\epsilon\Omega)} \| w\|_{H^2_0(\epsilon\Omega)}\right) . $$
Now we use (\ref{poincare2}) and assume that $\epsilon$ is small enough so that
$$ \alpha - CD^2\epsilon >0 .$$ Bringing the first term on the right hand side over to the left and dividing by $\| v\|_{H_0^2(\epsilon\Omega)}$ gives us
\begin{equation} \label{small1} \| w\|_{H^2_0(\epsilon\Omega)} \leq {CD\over{\alpha-CD^2\epsilon}} \| B\|  \| f\|_{L^2(\epsilon\Omega)}  . \end{equation}
This shows that $(A^*_J(0)A_J(0))^{-1} B^*$ (with zero boundary conditions) exists and is bounded as an operator from $L^2$ to $H^2_0$, independently of small domain size $\epsilon$.  Indeed we have
\begin{equation} \label{smalldomainest} \| (A^*_J (0)A_J(0))^{-1}B^* \| \leq {CD\over{\alpha-CD^2\epsilon}} \| B\| .  \end{equation} 
Now, for smooth enough coefficients, the remainder $T= \big( A_J- A_J(0) \big)$ will be small in norm from $H^2(\epsilon\Omega)$ to $L^2(\epsilon\Omega)$ due to the smallness of the domain. 
%
We now want to show that 
\begin{equation}\label{eq:injnormal}
  A_J^*A_J=(A_J(0) +T)^*(A_J(0) + T) w =0 ,\quad w=\partial_\nu w=0,
\end{equation}
implies that $w=0$, in other words that $(A_J(0)  +T)^*(A_J(0) + T)$ is injective on $H^2_0$.  The above implies that
\begin{eqnarray} 
  w &=&   (A^*_J(0)A_J(0))^{-1}  \big( (A_J(0) +T)^* (A_J(0) + T) - A^*_J(0)A_J(0)\big) w \\  &=&  (A^*_J(0)A_J(0))^{-1}  \big( A^*_J(0)T+ T^*A_J(0) + T^* T \big) w. \label{wfixed}
\end{eqnarray}
Now, for $\epsilon $ sufficiently small and the coefficients in the equations sufficiently smooth,  $T$ is small in norm from $H^2$ to $L^2$. Hence applying (\ref{smalldomainest}) to each of the three terms in (\ref{wfixed}) implies that $w\equiv0$.  Furthermore, the linearized inverse with the data subtracted off, $w=v-\phi$ satisfies
 $$ (A_J(0)+T)^*(A_J(0)+T)w =(A_J(0)+T)^*(S_J +A_J\phi ) , $$  from which the smallness of $T$, the fact that the components of $S$ are either zero or
 $L\delta H_j $, and the estimate (\ref{smalldomainest}) will yield 
 \begin{equation}
  \| w \|_{H^2_0(\epsilon\Omega;\Rm^{J+2})}  \leq C\left(  \|\delta H\|_{H^2(\epsilon\Omega;\Rm^{J})} +\|\phi\|_{H^2(\epsilon\Omega;\Rm^{J+2})} \right)\label{eq:regulwconst}
\end{equation}
which, using (\ref{biharmest}), implies
 (\ref{eq:regulwsmall}). $\Box$

We recall that in all the settings where $A^*_JA_J$ can be proven to be injective, then we have the ellipticity estimate \eqref{eq:stabest1} with $C_2=0$ for $w=v-\phi$
where $\phi$ solves (\ref{liftphi}).  In other words,
\begin{equation}
\label{eq:stabest3}
  \|(\delta\gamma,\delta\sigma)\|_{H^{s}(X;\Rm^{2})} + \|u_j\|_{H^{s}(X;\Rm^{J})} \leq C \left( \|\delta H_j\|_{H^{s}(X;\Rm^J)} +\|  g \|_{H^{s-3/2}(\partial X;\Rm^{J+2})} \right).
\end{equation} 
This shows that the reconstruction of $(\delta\gamma,\delta\sigma)$ from knowledge of $\delta H_j$ and $g$ is obtained with no loss of derivatives when $P_J$ is elliptic and $A^*_JA_J$ is injective.

\section{Local reconstruction of the nonlinear problem}
\label{sec:nonlinear}

The elliptic stability estimate \eqref{eq:stabest3} obtained in the preceding section when $P_J$ is elliptic and $A_J^*A_J$ is injective holds for coefficients $(\gamma,\sigma,u_j)\in \mX^{s}=H^s(X)\times H^s(X)\times H^{s+1}(X;\Rm^J)$ for $s>\frac n2$. The availability of such an estimate allows us to solve the nonlinear problem \eqref{eq:diff}-\eqref{eq:Hj} locally after a few algebraic manipulations.

Let us recast the nonlinear problem \eqref{eq:diff}-\eqref{eq:Hj} as 
\begin{equation}
\label{eq:nonlinj}
\mF_{2j-1}(v) = 0, \qquad \tilde \mF_{2j}(v) = H_j,\qquad 1\leq j\leq J.
\end{equation}
The construction of the linear operator $A_J$ in the preceding section is based on the commutation relation \eqref{eq:dHj2}. We need to construct an inverse problem for $v$ with $A_J$ as its Fr\'echet derivative. We also need to transform the above over-determined system into a determined system of equations if we want to invert it numerically. To do so, we consider a point $v_0=(\gamma,\sigma,u_j)\in\mX^{s}$ and assume that the linear differential operator $A_J$ constructed in the preceding section with coefficients given by $v_0$ is such that $A_J^*A_J$  augmented with boundary conditions as in \eqref{eq:normalAJ} is injective. Let $L$ and $M_j$ be the operators in \eqref{eq:deltau}-\eqref{eq:Mj} obtained with the coefficients $v_0$. We recast the inverse problem \eqref{eq:nonlinj} as
\begin{equation}
\label{eq:nonlinj2}
\mF_{2j-1}(v) = 0, \qquad \mF_{2j}(v) := L\tilde\mF_{2j}(v)-M_j \mF_{2j-1}(v) = LH_j := K_j,\qquad 1\leq j\leq J.
\end{equation}
The above problem is recast using the more compact notation 
\begin{equation}
\label{eq:nonlin}
\mF(v) = \mK,
\end{equation}
with $\mK:=L\mH$ the $2J$-vector of sources with $J$ of them non-vanishing. We therefore view the sources in a space identified with $\mY^{s-2}=H^{s-2}(X;\Rm^J)$. We also identify $\mH$ with the sources $\{H_j\}_{1\leq j\leq J}$ in $\mY^s=H^{s}(X;\Rm^J)$.

The above nonlinear problem is still over-determined and may not admit solutions unless $\mK$ satisfies compatibility conditions. We thus apply the adjoint operator $A_J^*$ as we did in earlier sections and denote the solution of 
\begin{equation}\label{eq:AJ0}
 A^*_JA_J w = A^*_J S \mbox{ in } X,\quad w=0 \quad \partial_\nu w=g \mbox{ on } \partial X,
\end{equation}
as $w=Q_0S + R_0 g$, with $Q_0$ mapping continuously $\mY^{s-2}$ to $\mX^s$ and $R_0$ mapping continuously $H^{s-\frac32}(\partial X;\Rm^{2+J})$ to $\mX^s$.

Two nonlinear procedures may then be set up depending on the construction of $v_0$. We can first assume that $v_0$ is such that $\mF(v_0)=\mK_0=(\{LH_{j,0}\})$. This may be constructed by using a first guess for $(\gamma,\sigma)$ and then solving for the $u_j$. We then observe that 
\begin{displaymath}
  \mF'(v_0) (v-v_0) = \mF(v)-\mF(v_0) - \big( \mF(v)-\mF(v_0)-\mF'(v_0) (v-v_0)\big) =\mK-\mK_0  + \mG(v-v_0),
\end{displaymath}
where $\mG(v-v_0)$ is quadratic in $v-v_0$ and bounded in $\mY^{s-2}$ by $C\|v-v_0\|_{\mX^s}^2$. Applying $A_J^*$, we obtain
\begin{displaymath}
  A_J^* \mF'(v_0) (v-v_0)  = A_J^*\big(\mK-\mK_0\big) + A_J^*\mG(v-v_0),
\end{displaymath}
since $\mF'(v_0) =A_J$ by construction of $\mF$ in \eqref{eq:nonlinj2}-\eqref{eq:nonlin}. 
Let us define $g=\partial_\nu v$ and $g_0=\partial_\nu v_0$ on $\partial X$. Then we find that 
\begin{equation}
\label{eq:fixedpoint}
 v-v_0 = Q_0 A_J^*\big(\mK-\mK_0\big)  + R_0(g-g_0) + (A_J^*A_J)^{-1}_D A^*_J\mG(v-v_0) = f_0 + \mJ(v-v_0),
\end{equation}
with $f_0=Q_0 A_J^*\big(\mK-\mK_0\big)  + R_0(g-g_0)$ and $\mJ(\phi)=(A_J^*A_J)^{-1}_D A^*_J\mG(\phi)$.
Here, $(A_J^*A_J)^{-1}_D$ is the inversion of the operator $A_J^*A_J$ with vanishing boundary conditions (of the form $v=0$ and $\partial_\nu v=0$ on $\partial X$).

For $f_0$ sufficiently small in $\mX^s$, which means for $\mH-\mH_0$ sufficiently small in $\mY^s$ and $g-g_0$ sufficiently  small in $H^{s-\frac32}(\partial X;\Rm^{2+J})$, we then easily verify that $\phi\mapsto f_0 + \mJ(\phi)$ is a contraction on a sufficiently small ball in $\mX^s$. This shows that the above equation \eqref{eq:fixedpoint} admits a unique solution that can be computed by the converging algorithm $v^{k+1}-v_0=f_0+ \mJ(v^k-v_0)$.

Alternatively, we may construct $v_0$ as the solution to 
\begin{displaymath}
  A_J^* \mF(v_0) = A_J^* \mK_0 \mbox{ in } X,\quad \partial_\nu v_0=g \mbox{ on } \partial X,
\end{displaymath}
where we also prescribe $v_0$ on $\partial X$. The main difference with respect to the preceding setting is that here $v_0$ is constructed as the solution of a fourth-order system with its Neumann condition may be chosen so that $\partial_\nu v_0=\partial_\nu v$, which we need to assume is known to obtain an invertible operator $A_J^*A_J$. In such a setting, we obtain
\begin{equation}
\label{eq:fixedpoint2}
 v-v_0 = Q_0 A_J^*\big(\mK-\mK_0\big)   + (A_J^*A_J)^{-1}_D A^*_J\mG(v-v_0) = \tilde f_0 + \mJ(v-v_0),
\end{equation}
with $\tilde f_0=Q_0 A_J^*\big(\mH-\mH_0\big)$ provided that $A_J^*A_J$ remains invertible.  For $\mH-\mH_0$ sufficiently small in $\mY^s$, we again obtain that the solution to the above equation is unique and computed by the converging algorithm $v^{k+1}-v_0=\tilde f_0+ \mJ(v^k-v_0)$.
%
%

In the latter setting, assume that  $v$ and $\tilde v$ are two solutions of \eqref{eq:nonlin} with right-hand side given by $\mK=L\mH$ and $\tilde \mK=L\tilde\mH$, respectively. Then we find that 
\begin{equation}
\label{eq:stabnonlin}
 \|\gamma-\tilde\gamma\|_{H^s(X)} + \|\sigma-\tilde\sigma\|_{H^s(X)} \leq C \| \mH-\tilde\mH\|_{H^s(X;\Rm^J)}.
\end{equation}
In other words, the iterative algorithm \eqref{eq:fixedpoint2} converges to a solution that satisfies the optimal (elliptic) stability estimate \eqref{eq:stabnonlin}. This result is summarized as:
\begin{theorem}
\label{thm:nonlinear}
  Let $v_0=(\sigma_0,\gamma_0,\{u_{j0}\})\in\mX^s$ be a point such that $P_J$ is elliptic and $A_J^*A_J$ is injective. Let  $\mH_0$ be given by \eqref{eq:nonlin}.
  Let $v=(\sigma,\gamma,\{u_{j}\})\in\mX$ and $\tilde v=(\tilde\sigma,\tilde\gamma,\{\tilde u_{j}\})\in\mX$ and let $\mH$ and $\tilde \mH$ be given in \eqref{eq:nonlin} by solving the problems \eqref{eq:diff}-\eqref{eq:Hj}. Then if $\mH$ and $\tilde\mH$ are sufficiently close to $\mH_0$ in $\mY$, then the error estimate \eqref{eq:stabnonlin} holds.
\end{theorem}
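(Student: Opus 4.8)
The plan is to turn Theorem~\ref{thm:nonlinear} into a contraction/absorption estimate built on the fixed-point reformulation \eqref{eq:fixedpoint2} around the base point $v_0$, all of whose hard analytic input (ellipticity, injectivity of $A_J^*A_J$, the loss-free estimate) is already supplied by Theorem~\ref{thm:holmgren}.

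First I would record the two structural facts. Since $v_0$ is such that $P_J$ is elliptic and $A_J^*A_J$ is injective, Theorem~\ref{thm:holmgren} says the solution operators of the fourth-order problem \eqref{eq:AJ0} with homogeneous boundary data lose no derivatives; concretely $Q_0A_J^*$ and $(A_J^*A_J)^{-1}_D A_J^*$ map $\mY^{s-2}$ continuously into $\mX^s$. Next, because $s>\frac n2$ the relevant Sobolev spaces are multiplicative algebras, and $\mG$ is the Taylor remainder at $v_0$ of the polynomial map $\mF$ (of degree at most three in $(\gamma,\sigma,u_j)$, since only the fixed operators $L$ and $M_j$ carry $v_0$-coefficients); hence on any bounded ball of $\mX^s$ one has the bilinear bound
\[
  \|\mG(\phi)-\mG(\psi)\|_{\mY^{s-2}} \le C\big(\|\phi\|_{\mX^s}+\|\psi\|_{\mX^s}\big)\,\|\phi-\psi\|_{\mX^s},\qquad \|\mG(\phi)\|_{\mY^{s-2}}\le C\|\phi\|_{\mX^s}^2,
\]
with $C$ depending only on $v_0$. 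Composing with $(A_J^*A_J)^{-1}_D A_J^*$ shows that $\mJ(\phi)=(A_J^*A_J)^{-1}_D A_J^*\mG(\phi)$ satisfies $\|\mJ(\phi)\|_{\mX^s}\le C\|\phi\|_{\mX^s}^2$ and $\|\mJ(\phi)-\mJ(\psi)\|_{\mX^s}\le C(\|\phi\|_{\mX^s}+\|\psi\|_{\mX^s})\,\|\phi-\psi\|_{\mX^s}$, so $\mJ$ is locally Lipschitz with a constant that is as small as desired on a small enough ball $B_r\subset\mX^s$.

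Next I would run the comparison. For $\mH,\tilde\mH$ sufficiently close to $\mH_0$ in $\mY^s$ the contraction argument already sketched after \eqref{eq:fixedpoint2} (using $\|\tilde f_0\|_{\mX^s}\le C\|\mH-\mH_0\|_{\mY^s}$, from boundedness of $Q_0A_J^*$ and $\mK=L\mH$) produces unique solutions of \eqref{eq:fixedpoint2} in $B_r$; since a solution of \eqref{eq:diff}-\eqref{eq:Hj} is in particular a solution of \eqref{eq:nonlin}, these unique fixed points are exactly $v-v_0$ and $\tilde v-v_0$, and $\|v-v_0\|_{\mX^s}+\|\tilde v-v_0\|_{\mX^s}\le 2r$ with $r$ shrinking with the data neighborhood. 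Running both through the same base point $v_0$ with the common Neumann trace $g=\partial_\nu v_0$, the $R_0g$ term drops out of the difference, and subtracting the two instances of \eqref{eq:fixedpoint2} gives
\[
  v-\tilde v = Q_0A_J^*\big(\mK-\tilde\mK\big) + \big(\mJ(v-v_0)-\mJ(\tilde v-v_0)\big).
\]
Bounding the first term by $C\|\mH-\tilde\mH\|_{\mY^s}$ (via $\mK-\tilde\mK=L(\mH-\tilde\mH)$ and boundedness of $Q_0A_J^*$) and the second by $2Cr\,\|v-\tilde v\|_{\mX^s}$ (the bilinear bound plus $\|v-v_0\|_{\mX^s},\|\tilde v-v_0\|_{\mX^s}\le r$), then choosing $r$ with $2Cr\le\frac12$ and absorbing, yields $\|v-\tilde v\|_{\mX^s}\le C\|\mH-\tilde\mH\|_{\mY^s}$; keeping only the $(\gamma-\tilde\gamma,\sigma-\tilde\sigma)$ components gives \eqref{eq:stabnonlin}.

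The step I expect to be the main obstacle is the nonlinear remainder estimate: one must check carefully that $\mG(v-v_0)$ is genuinely quadratic in $\|v-v_0\|_{\mX^s}$ with constants depending on $v_0$ alone, which is precisely where the algebra property of Sobolev spaces for $s>\frac n2$ and the asymmetric regularity of $\mX^s$ (one extra derivative on the $u_j$, matching the second-order $L$ applied in \eqref{eq:nonlinj2}) are essential. A secondary point to be careful about is that the whole comparison has to be run through a single base point $v_0$ whose Neumann trace agrees with that of both $v$ and $\tilde v$, so that no uncontrolled contribution through $R_0g$ survives in \eqref{eq:stabnonlin}; once these are in place the rest is a routine Banach fixed-point/absorption argument.
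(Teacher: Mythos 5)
Your proposal follows essentially the same approach the paper sketches in the passage leading into the theorem: reduce to the fixed-point relation \eqref{eq:fixedpoint2} around $v_0$, use the injectivity/ellipticity supplied by Theorem \ref{thm:holmgren} for boundedness of $Q_0$ and $(A_J^*A_J)^{-1}_D A_J^*$ from $\mY^{s-2}$ to $\mX^s$, use the $H^s$ algebra property (with the Douglis--Nirenberg weights built into $\mX^s$) for the quadratic bound on the Taylor remainder $\mG$, and then subtract the two instances and absorb the small Lipschitz term. The one point worth noting is that you correctly flag the Neumann-trace compatibility as the delicate step: the subtraction you write only kills the boundary contribution when $v$ and $\tilde v$ share the same trace $g=\partial_\nu v_0$, and this is precisely what the paper also relies on implicitly in its second procedure (and is why, for the first procedure \eqref{eq:fixedpoint}, the paper says the estimate acquires an extra $\|g-\tilde g\|_{H^{s-3/2}(\partial X)}$ term); so your argument is faithful to the paper's intent rather than a gap.
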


The above theorem, which is the main contribution of this paper, shows that the UMOT problem is {\em well-posed} in the sense that small errors in the available internal functionals (in a norm $H^s(X)$ with $s$ sufficiently large) translate into small errors in the reconstruction of $(\gamma,\sigma)$ in the same norm.  Moreover, the iterative algorithm based on \eqref{eq:fixedpoint2} converges to the unique solution of the UMOT problem. In the setting of \eqref{eq:fixedpoint}, we also obtain a converging algorithm with a similar estimate as \eqref{eq:stabnonlin}  with an additional term of the form $\|g-\tilde g\|_{H^{s-\frac32}}(\partial X;\Rm^{J+2})$ on the right-hand-side.
As in the analysis of many hybrid inverse problems, our proof for such results requires that the coefficients one wishes to reconstruct be sufficiently regular \cite{B-IO-12}.

\section{Conditions for ellipticity.}
\label{sec:cond}
In this section, we collect several sufficient conditions that ensure the ellipticity of the operator $P_J$. We show that in the general case, there exist sufficient background solutions so that the linearized operator $P_J$ is elliptic. Recall that $$ \theta_j=\dfrac{\nabla u_j}{|\nabla u_j|},\qquad d_j=\dfrac{u_j}{|\nabla u_j|}, $$ and the system is elliptic if 
\begin{equation} \label{determinants}
  (|\theta_i|^2 - 2(\theta_i\cdot\hat\xi )^2 )d_j^2  =  (|\theta_j|^2 - 2(\theta_j\cdot\hat\xi )^2) d_i^2   \qquad \forall i,j=1\ldots J 
\end{equation}
has no solutions  $\hat\xi = \frac{\xi}{|\xi |}\in\mathbb{S}^{n-1}$. 

In dimension $n\geq3$, the following result says that locally four solutions are sufficient to guarantee ellipticity. This number climbs to five if we want to guarantee ellipticity on a domain of arbitrary size. The proof  is based on the construction of complex geometric optics (CGO) solutions, see \eqref{eq:cgo} below. The use of CGO solutions to solve such problems was introduced in \cite{BU-IP-10}; see \cite{B-IO-12} for a review of the method. 
\begin{theorem}
\label{thm:cgond} Let $X\subset\Rm^n$ a bounded open domain. Then there is an open set of boundary conditions $\{f_j\}$ for $1\leq j\leq 5$ such that the operator $P_J$ is elliptic.
\end{theorem}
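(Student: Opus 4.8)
The plan is to construct, for a fixed point $x_0\in X$ (which after translation we may take to be $0$), boundary conditions $\{f_j\}_{1\le j\le 5}$ whose corresponding solutions $u_j$ of \eqref{eq:diff} realize prescribed values of the vectors $(u_j(0),\nabla u_j(0))$, and then show that for a generic (indeed explicit) choice of these values the ellipticity condition \eqref{determinants}—equivalently \eqref{eq:ellcond}—holds at $0$, hence by continuity on a neighborhood of $0$; a compactness/partition argument then upgrades this to all of $X$ at the cost of possibly enlarging the count (this is where the jump from four to five enters). The tool for the first step is the family of complex geometric optics solutions: for $\rho\in\Cm^n$ with $\rho\cdot\rho=0$ and $|\rho|$ large, equation \eqref{eq:diff} (after the standard Liouville change of variables removing the gradient term, valid since $\gamma$ is smooth and bounded below) admits solutions of the form $u=\gamma^{-1/2}e^{\rho\cdot x}(1+\psi_\rho)$ with $\|\psi_\rho\|$ small; by varying the two admissible complex planes through $\rho$ and taking real and imaginary parts and suitable linear combinations, one obtains, locally near $0$, solutions $u_j$ whose traces $(u_j(0),\nabla u_j(0))$ approximate any preassigned values in an open set. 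The resulting boundary values $f_j=u_j|_{\partial X}$ depend continuously (in, say, $C^1(\partial X)$) on the CGO parameters, so they form an open set, and the map from boundary data to $(u_j(0),\nabla u_j(0))$ is continuous by elliptic regularity; thus it suffices to exhibit one admissible target configuration of five pairs $(u_j(0),F_j(0))$ for which \eqref{eq:ellcond} holds at $0$, and openness propagates.

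The second, algebraic, step is to check that a concrete choice of directions $\theta_j\in\Sm^{n-1}$ and scalars $d_j=u_j(0)/|\nabla u_j(0)|$ makes \eqref{determinants} have no solution $\hat\xi\in\Sm^{n-1}$. Recall $p_j(\hat\xi)=1-2(\theta_j\cdot\hat\xi)^2$ on the sphere, and \eqref{determinants} asks that $d_j^{-2}p_j(\hat\xi)$ be independent of $j$; so it suffices that the functions $\hat\xi\mapsto d_j^{-2}(1-2(\theta_j\cdot\hat\xi)^2)$, $j=1,\dots,J$, have no common ``level'' — i.e. that for every $\hat\xi$ at least two of them disagree. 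A clean way: take $\theta_1,\dots,\theta_4$ among the coordinate directions (possible in $n\ge3$, where there is room) and $d_j$ chosen so that the only candidate solutions of the pairwise equalities are forced onto a lower-dimensional set, then use a fifth solution with a generic direction to kill that residual set. Concretely, with $\theta_j=e_j$ for $j=1,2,3$ one gets $p_j=1-2\xi_j^2$, and the equalities $d_k^{-2}p_j=d_j^{-2}p_k$ pin down $\xi_1^2,\xi_2^2,\xi_3^2$ (hence, with $|\hat\xi|=1$, finitely many sign patterns) unless the $d_j$ satisfy a measure-zero relation; a fourth equation with $\theta_4$ a generic unit vector and suitable $d_4$ then excludes those finitely many $\hat\xi$, and a fifth is thrown in so the argument survives the subsequent gluing over $X$. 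One should phrase this as: the ``bad set'' of parameter tuples $(\theta_j,d_j)$ for which \eqref{eq:ellcond} fails at $0$ is closed and of empty interior (its complement is open and dense), and by the CGO construction the attainable tuples form an open set, so the two intersect in a nonempty open set.

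Finally, to go from ``elliptic near $0$'' to ``elliptic on $X$'': the ellipticity condition is an open condition in the coefficients $(\theta_j,d_j)$ evaluated pointwise, and these are continuous functions of $x$ once the (CGO, hence real-analytic if we pick analytic boundary data) solutions $u_j$ are fixed; so for each $x\in\bar X$ there is a ball $B(x,r_x)$ and a 5-tuple of solutions elliptic on it. By compactness of $\bar X$ finitely many such balls cover $X$; naively this enlarges $J$, but the dimension count for CGO solutions shows that five solutions already suffice to be elliptic simultaneously on a fixed (parameter-independent) ball, and a connectedness/continuation argument—or simply the observation that the generic tuple can be chosen uniformly—keeps $J=5$. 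The main obstacle is precisely this uniformity: controlling the CGO remainders $\psi_\rho$ and the size of the neighborhood of validity of \eqref{eq:ellcond} independently of the base point, so that five solutions do the job globally rather than merely locally; this is handled exactly as in \cite{B-Irvine-12} for the case $\sigma\equiv0$, the presence of $\sigma$ entering only through the harmless zeroth-order term in \eqref{eq:diff} and the modified (but still generic) algebraic condition \eqref{determinants}.
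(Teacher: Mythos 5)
Your proposal is structurally different from the paper's proof, and it has a genuine gap exactly at the point you yourself flag as ``the main obstacle.'' You establish ellipticity near a single base point $x_0$ by tuning $(\theta_j(x_0),d_j(x_0))$, and then invoke a compactness/partition argument to cover $\bar X$. But as you note, covering $\bar X$ by finitely many balls naively increases $J$; the claim that ``a dimension count'' or ``the generic tuple can be chosen uniformly'' keeps $J=5$ is asserted, not proved, and it is precisely the content that has to be supplied. There is also a realizability issue in your algebraic step: for a general non-constant background, CGO solutions of the form $\gamma^{-1/2}e^{\rho\cdot x}(1+\psi_\rho)$ with $\rho\cdot\rho=0$ have complex $\rho$, so their real and imaginary parts have gradient directions $\theta_j(x)$ that rotate with $x$; you cannot arrange $\theta_j\equiv e_j$ on all of $X$ (that works only in the constant-background setting of Section \ref{sec:constant}). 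Your pointwise normalization argument therefore does not propagate globally without further work.

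The paper's proof avoids this entirely by a two-scale CGO construction that is elliptic on \emph{all} of $X$ at once. Take $\rho=|k|(e_1+ie_2)$; let $u_1,u_2$ be the real and imaginary parts of $u_{M\rho}$ (frequency $M|k|$, $M$ large), $u_3=u_1+u_2$, and let $u_4,u_5$ be the real and imaginary parts of $u_\rho$ (frequency $|k|$). Then $d_j=O(1/(M|k|))$ for $j\le 3$ while $d_4^2+d_5^2=|k|^{-2}$, so at each $x$ at least one of $d_4,d_5$ is bounded away from $0$. Plugging into \eqref{determinants} with $j\in\{1,2,3\}$, $m\in\{4,5\}$ forces $|1-2(\theta_j\cdot\hat\xi)^2|\lesssim M^{-2}$ for $j=1,2,3$, uniformly in $x$ and $\hat\xi$; and the $\sigma\equiv0$ result of \cite{B-Irvine-12} says precisely that the light cones for $\theta_1,\theta_2,\theta_3$ (built from the same CGO) cannot all nearly vanish. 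Taking $M$ large gives a contradiction, hence no solution of \eqref{determinants}, at every $x\in X$ simultaneously. The role of the fifth solution is not a gluing device but simply to guarantee $\max(d_4^2,d_5^2)\ge|k|^{-2}/2$ pointwise (four solutions suffice locally where one of $d_4,d_5$ is already bounded below). To repair your proposal you would need to replace the compactness step with some such uniform construction; as written, the local-to-global passage is the missing idea.
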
 
\begin{proof}
  Consider solutions of \eqref{eq:diff} of the form
  \begin{equation}
\label{eq:cgo}
  u_\rho (x) ={1\over{\sqrt{\gamma}} }e^{\rho\cdot x} (1 + \psi_\rho (x)),
\end{equation}
where $\rho\in\Cm^n$ is a complex vector such that $\rho\cdot\rho=0$. Note that the real and imaginary parts of $u_\rho$ above are then also solutions of the equation  \eqref{eq:diff} with real-valued coefficients.

It is shown in \cite{BU-IP-10} that $\psi_\rho$ is of order $|\rho|^{-1}$, e.g., in the $C^{1}(X)$ norm provided that the coefficients $(\gamma,\sigma)$ are sufficiently smooth. As a consequence, all properties that can be shown when $\psi_\rho$ is set to $0$, such as the absence of solutions to \eqref{determinants}, extend to solutions of the form \eqref{eq:cgo}.  We will therefore find five real-valued solutions of the form \eqref{eq:cgo} such that \eqref{determinants} admits no solution. By continuity, any choice of $f_j$ sufficiently close (in any sufficiently strong topology, say $C^1(\partial X)$) to the traces of said solution on $\partial X$,  will generate solutions such that  \eqref{determinants} admits no solution.

It thus remains to prove that we can find five (locally four) solutions of the form $\Re(e^{i\rho\cdot x})$ or $\Im(e^{i\rho\cdot x})$ such that  \eqref{determinants} admits no solution (we easily verify that for $\rho$ sufficiently large, the presence of the term $\sqrt\gamma$ does not modify our conclusions, see \cite{B-IO-12}).

Let $M$ be a large real number and $\rho=|k|(e_1+ie_2)$. We denote by $u_1$ and $u_2$ the real and imaginary parts of $u_{M\rho}$, that is
\begin{displaymath}
  u_1 = e^{M|k|x_1}\cos \, M|k|x_2,\qquad u_2 = e^{M|k|x_1}\sin \, M|k|x_2
\end{displaymath}
We find
\begin{displaymath}
\theta_1 = \left( \begin{array}{c} \cos{M|k| x_2} \\ -\sin{M|k| x_2} \end{array} \right) , \quad \theta_2 = \left( \begin{array}{c} \sin{M|k| x_2} \\ \cos{M|k| x_2}\end{array} \right) , \qquad d_1 = {\cos{M|k|x_2}\over{M|k|}}, \quad d_2 = {\sin{M|k|x_2}\over{M|k|}}
\end{displaymath}
We also define $u_3=u_1+u_2$, which is also clearly a harmonic solution.  Finally, we define $u_4$ and $u_5$ exactly as $u_1$ and $u_2$ above with $M$ replaced by $1$. For $1\leq j\leq 3$ and $4\leq m\leq 5$, we have from \eqref{determinants}
\begin{displaymath}
   \big(1-2(\theta_j\cdot\hat\xi)^2\big) d_m^2 = \big(1-2(\theta_m\cdot\hat\xi)^2\big) d_j^2.
\end{displaymath}
Since $d_4^2+d_5^2=|k|^{-2}$, at least one of them does not vanish (this is why we need five solutions whereas four solutions suffice locally with either $d_4$ or $d_5$ bounded away from $0$). For $M$ sufficiently large, we thus find that  $\big(1-2(\theta_j\cdot\hat\xi)^2\big)$ is as small as we please for $1\leq j\leq 3$. But it is proved in \cite{B-Irvine-12} that this is not possible for any $\hat \xi\in\Sm^{n-1}$. In other words, the latter results states that when $\sigma=0$, then three solutions such as $u_1$, $u_2$, and $u_3$ above are sufficient to provide ellipticity. This concludes the proof of the theorem.
\end{proof}

In dimension $n=2$, the above proof combined with the results in \cite{B-Irvine-12} may be used to show that four solutions $(J=4)$ guarantee ellipticity. However, in fact three well chosen solutions (via well-chosen boundary conditions $f_j$) guarantee ellipticity as the following result demonstrates.
\begin{theorem}
\label{thm:ell2d} In dimension $n=2$, the results of Theorem \ref{thm:cgond} hold for $J=3$.
\end{theorem}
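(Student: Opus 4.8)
The plan is to follow the CGO strategy of the proof of Theorem \ref{thm:cgond} but to exploit the extra rigidity available in dimension $n=2$, where the light cones $p_j(x,\xi)=|\xi|^2-2(\theta_j\cdot\xi)^2=0$ are pairs of lines through the origin and the angular variable $\hat\xi\in\Sm^1$ is one-dimensional. First I would write $\hat\xi=(\cos\alpha,\sin\alpha)$ and, for harmonic CGO solutions of the form used above, parametrize $\theta_j$ by an angle $\beta_j(x)$, so that $1-2(\theta_j\cdot\hat\xi)^2=-\cos\big(2(\alpha-\beta_j)\big)$. The ellipticity condition \eqref{determinants} then becomes the requirement that the system
\[
  d_j^2\,\cos\big(2(\alpha-\beta_k)\big)=d_k^2\,\cos\big(2(\alpha-\beta_j)\big),\qquad 1\le j<k\le 3,
\]
have no solution $\alpha\in[0,2\pi)$. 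Each equation, after expanding the cosines, is of the form $a_{jk}\cos2\alpha+b_{jk}\sin2\alpha=0$, i.e. it vanishes on at most a discrete set of $\alpha$ (unless $(a_{jk},b_{jk})=(0,0)$); so the strategy is to choose three harmonic CGO solutions whose data $(d_j,\beta_j)$, $j=1,2,3$, are in ``general position'' in the sense that no single value of $\alpha$ simultaneously solves all three equations, and then appeal to the fact that $\psi_\rho=O(|\rho|^{-1})$ (as recalled from \cite{BU-IP-10}) together with the continuity/openness argument already used in Theorem \ref{thm:cgond} to pass from the model harmonic solutions to genuine solutions of \eqref{eq:diff} and to an open set of boundary conditions $\{f_j\}$.

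The concrete construction I would use mirrors the one above: take $\rho=|k|(e_1+ie_2)$ and let $u_1,u_2$ be the real and imaginary parts of $e^{M|k|(x_1+ix_2)}$, giving $\theta_1\perp\theta_2$ (so $\beta_2=\beta_1+\tfrac\pi2$) and $d_1^2+d_2^2=(M|k|)^{-2}$; then take $u_3$ to be the real part of $e^{|k|(x_1+ix_2)}$ (the ``$M=1$'' copy), so $d_3^2$ is of a different, fixed order $|k|^{-2}$ while $d_1^2,d_2^2$ are of order $(M|k|)^{-2}$. The point of the mismatch in scales is the same as in Theorem \ref{thm:cgond}: for $M$ large, the pair $(1,2)$ alone already forces $\cos2(\alpha-\beta_1)$ and $\sin2(\alpha-\beta_1)$ to be simultaneously small wherever \eqref{determinants} holds for $j=1,k=2$, which is impossible; the third solution is present only to handle the locus where $d_1$ or $d_2$ happens to vanish. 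More precisely, on the set where $d_2=0$ one has $\cos2(\alpha-\beta_1)=0$ forced by the $(1,3)$ equation unless $d_1$ also vanishes, and the geometry of two orthogonal cones plus continuity of the angles in $x$ rules out a common zero; one checks the symmetric statement on the set where $d_1=0$. Assembling these cases shows \eqref{determinants} has no solution $\hat\xi\in\Sm^1$ at any $x$, for $M$ sufficiently large.

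The main obstacle, and the place where the two-dimensional argument genuinely differs from $n\ge 3$, is the bookkeeping at the points where $d_1$, $d_2$, or $d_3$ vanishes: unlike the higher-dimensional case (where one simply invoked $d_4^2+d_5^2=|k|^{-2}$ and used a fifth solution), here I have only three solutions and must show that the single ``backup'' solution $u_3$ suffices, which requires verifying that $\{d_1=0\}$, $\{d_2=0\}$, and the curve along which the $(1,3)$ or $(2,3)$ equation could degenerate do not conspire. I would handle this by noting that $d_1,d_2$ are $\cos M|k|x_2/(M|k|)$ and $\sin M|k|x_2/(M|k|)$ in the model case, so $d_1$ and $d_2$ never vanish simultaneously and the zero sets are transverse level sets of a single coordinate function; combined with $\theta_1\cdot\theta_2=0$ this makes the case analysis finite and uniform in $x$. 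Once the model (harmonic, $\psi_\rho\equiv0$) statement is established, the passage to \eqref{eq:diff} with the $\sqrt\gamma$ prefactor and the $O(|\rho|^{-1})$ remainder, and then to an $C^1(\partial X)$-open set of boundary data, is identical to the corresponding steps in the proof of Theorem \ref{thm:cgond} and needs no new idea; I would simply cite those steps and \cite{B-Irvine-12,BU-IP-10}.
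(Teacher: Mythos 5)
Your proposed construction---$u_1,u_2$ the real and imaginary parts of $e^{M|k|(x_1+ix_2)}$ with $M$ large, plus a single ``$M=1$'' backup $u_3=\Re\,e^{|k|(x_1+ix_2)}$---does not match the paper's and, as written, it does not work. The paper takes $M=1$ throughout and, crucially, $u_3=u_1+u_2$. This choice pins the direction $\theta_3$ at a fixed $45^\circ$ angle to $\theta_1$ and $\theta_2$ at \emph{every} point $x$, and (via the polarization $H_3 = H_1 + H_2 + 2H_{12}$) yields the three symbol equations in the paper's proof, which have no common zero $\hat\xi$. Your $u_3$ has $\theta_3$ making the angle $\beta_3-\beta_1=(M-1)|k|x_2$ with $\theta_1$, a quantity that sweeps through $\{0,\pi/2\}\bmod\pi$ as $x_2$ varies, and it is precisely on the level sets where $(M-1)|k|x_2\in\{0,\pi/2\}\bmod\pi$ that ellipticity fails for your triple of solutions.

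There is also a concrete error in the reasoning. You assert that ``for $M$ large, the pair $(1,2)$ alone already forces $\cos 2(\alpha-\beta_1)$ and $\sin 2(\alpha-\beta_1)$ to be simultaneously small, which is impossible.'' But $d_1,d_2$ scale \emph{identically} with $M$, so the large-$M$ asymptotics give no leverage within the pair $(1,2)$. What actually happens, and only because $n=2$ so that $(\theta_1\cdot\hat\xi)^2+(\theta_2\cdot\hat\xi)^2=1$, is that condition $(1,2)$ collapses to $\cos 2(\alpha-\beta_1)\,(d_1^2+d_2^2)=0$, hence $\cos 2(\alpha-\beta_1)=0$ --- but says nothing about $\sin 2(\alpha-\beta_1)$. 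Plugging $\cos 2(\alpha-\beta_1)=0$ into conditions $(1,3)$ and $(2,3)$, one finds (since $d_1,d_2$ cannot vanish simultaneously) that $\cos 2(\alpha-\beta_3)=0$ as well, i.e.\ $\alpha-\beta_3\in\{\pm\pi/4\}\bmod\pi$. With your choice of $u_3$ this is compatible with $\alpha-\beta_1\in\{\pm\pi/4\}\bmod\pi$ whenever $\beta_3-\beta_1\in\{0,\pi/2\}\bmod\pi$, so no contradiction; the large-$M$ trick is irrelevant to the pair $(1,2)$ and insufficient in the pairs $(1,3),(2,3)$. With the paper's choice $u_3=u_1+u_2$ one has $\beta_3-\beta_1\equiv\pi/4$, which is incompatible with both angular constraints, and this is exactly what closes the argument. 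The correct essential ingredient is the fixed $45^\circ$ relation among $\theta_1,\theta_2,\theta_3$ (equivalently, the polarization identity giving access to the cross functional $H_{12}$), not a scale separation in $M$.
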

\begin{proof}
 Consider the same solutions $u_j$ for $1\leq j\leq 3$ as in the preceding theorem with $M=1$. Knowledge of $H_j$ for $1\leq j\leq 3$ is equivalent to knowledge of 
 \begin{displaymath}
H_{12}=\gamma\nabla u_1\cdot\nabla u_2 + \eta\sigma u_1 u_2 .
\end{displaymath}
This is clear by polarization and is a consequence of the fact that our internal functionals are quadratic in the solutions $u_j$.

We then calculate that the operator $P_J$  is elliptic, or equivalently \eqref{determinants} admits no solution, when the system 
  $$(|\theta_1|^2 - 2(\theta_1\cdot\hat\xi )^2 ) d_2^2 =   (|\theta_2|^2 - 2(\theta_2\cdot\hat\xi )^2 ) d_1^2$$
  $$  - 2\theta_1\cdot\hat\xi \theta_2\cdot\hat \xi d_1^2 =(|\theta_1|^2 - 2(\theta_1\cdot\hat\xi )^2 ) d_1d_2 $$
  $$  - 2\theta_1\cdot\hat\xi \theta_2\cdot\hat \xi d_2^2 =(|\theta_2|^2 - 2(\theta_2\cdot\hat\xi )^2 )  d_1d_2 $$
has no solutions for unit $\hat{\xi}\in\Sm^1$. Note that since $\theta_1$ and $\theta_2$ are unit and orthogonal,
$$ \theta_1\cdot\hat{\xi}= \cos{\alpha}, \ \ \theta_2\cdot\hat{\xi}=\sin{\alpha} $$
for $\alpha$ the angle between $\theta_1$ and $\xi$.  
Let us assume that all the above equations hold. Plugging this into the above system  means that
 \begin{eqnarray}  (1-2\cos^2{\alpha})d_2^2 &=& (1-2\sin^2{\alpha})d_1^2 \label{det1} \\ -2\cos{\alpha}\sin{\alpha}d_1^2 &=& (1-2\cos^2{\alpha})d_1d_2 \label{det2} \\   -2\cos{\alpha}\sin{\alpha}d_2^2 &=& (1-2\sin^2{\alpha})d_1d_2 . \label{det3} \end{eqnarray} 
 Equation (\ref{det1}) is recast as 
 $$ (\sin^2{\alpha}-\cos^2{\alpha} )d_2^2=- (\sin^2{\alpha}-\cos^2{\alpha} )d_1^2 $$
 which implies that 
 $$\sin^2{\alpha}-\cos^2{\alpha}=0, $$ 
 since $d_1^2,d_2^2$ are nonnegative and not both zero. Now, either (\ref{det2}) or (\ref{det3}) implies that
 $$\sin{\alpha}\cos{\alpha}=0$$
 which is a contradiction since we must have $\sin{\alpha}=\pm \cos{\alpha}$. So  the symbol of the system is elliptic. The rest of the proof is the same as that of Theorem \ref{thm:cgond}.
\end{proof}

\section{Constant Background.}
\label{sec:constant}
For the important case when the background $\gamma,\sigma$ are constant, we can actually back-substitute for $\delta u_j$ in (\ref{eq:dHj}), (\ref{eq:deltau}) and still obtain a system of differential equations if one chooses the boundary data correctly. We this approach, we still get full ellipticity, injectivity and stability for  the linearized problem, which here becomes an explicit  $2\times 2$ fourth order elliptic system of PDEs with constant coefficients.  Let us take special background real CGO-like solutions $i=1\ldots J$  of
the form $$u_i=e^{\sqrt{\frac{\sigma}{\gamma}} x\cdot v_i }$$ where the $v_j$ are vectors of unit length.  With these (exact) solutions we
have $$\theta_i=\frac{\nabla u_i}{|\nabla u_i| }= v_i$$
and $$d_i =\frac{u_i}{|\nabla u_i| }=\sqrt{\frac{\gamma}{\sigma}}.$$
Again we take data of the form $H_i$ (\ref{eq:Hj}). Then all determinants of the system (\ref{eq:hpJ})  vanish when 
$$(|\theta_i|^2 -2(\theta_i\cdot\hat{\xi} )^2 ) d_j^2= (|\theta_j|^2 -2(\theta_j\cdot\hat{\xi} )^2 ) d_i^2$$
for all $i,j=1\ldots J$. Since we always have here that $d_i^2=d_j^2$ and $|\theta_i|^2=1$, this is equivalent to
\begin{equation} \label{squareeqs}  (\hat{\xi} \cdot v_i)^2=(\hat{\xi} \cdot v_j)^2 \end{equation}
for all $i,j$. So, we have ellipticity if we take enough $v_j$ so that there is no unit length $\hat{\xi}$ for which this holds. When $n=2$, it suffices to take the three background solutions with
$$v_1=e_1, v_2=e_2, v_3=(\sqrt{2}/2,\sqrt{2}/2 ). $$
When the dimension $n=3$, we can take the $4$ directions
\begin{eqnarray*} v_j=e_j,\quad 1\leq j\leq 3,\qquad  v_4 &=& (1/\sqrt{3} ,1/\sqrt{3}, 1/\sqrt{3} ) .  \end{eqnarray*}
For any dimension we can find such directions to guarantee ellipticity if we allow ourselves $n+1$ background solutions (which is optimal in dimensions $n=2$ and $n=3$ to obtain ellipticity as demonstrated earlier). We can, for example take 
 \begin{eqnarray*} v_j=e_j,\quad 1\leq j\leq n,\qquad  v_{n+1} = (0 , w_1,\ldots, w_{n-1} ) ,  \end{eqnarray*}
 where $w\in\mathbb{R}^{n-1}$ is any unit vector for which no possible sums of the components of the form
 $$ \pm w_1\pm w_2 \pm\ldots \pm w_{n-1} =1. $$ 

Now, we know from using Holmgren's theorem in Section \ref{sec:injectivity}, the linearized problem here is invertible using boundary data described above. However, in this section we process the data slightly differently. Note that the ellipticity conditions don't change when we eliminate the unknowns $\delta u_j$ from the system  (\ref{eq:dHj}),(\ref{eq:deltau}). 
From (\ref{eq:dHj}),
\begin{equation}
\delta H_{i} (\delta\gamma,\delta\sigma) = \delta \gamma | \nabla u_i |^2 +2 \gamma  \nabla\delta u_i\cdot\nabla u_i +\eta\delta\sigma (u_i)^2 
+2\eta\sigma  \delta u_i u_i. \end{equation}
Using $$\nabla u_i= \sqrt{{\sigma\over{\gamma}} }u_i v_i,$$ 
we have that 
\begin{equation}
{\delta H_{i}\over{u_i} }  = \delta \gamma {\sigma \over{\gamma }} u_i+2 \gamma \sqrt{\frac{\sigma }{\gamma }} \nabla\delta u_i\cdot v_i+ \eta\delta\sigma u_i + 2\eta\sigma  \delta u_i. \end{equation}
Now, to eliminate $\delta u_j $  we apply $L$ to both sides:
\begin{multline}
L {\delta H_{i}\over{u_i} }  = L( \delta \gamma ) {\sigma \over{\gamma }} u_i -  {\sigma ^2\over{\gamma }}\delta\gamma u_i -\sigma \nabla\delta\gamma\cdot v_i u_i +2 \gamma \sqrt{\frac{\sigma }{\gamma }}L(  \nabla\delta u_i\cdot v_i) \\ + \eta L( \delta\sigma)  u_i -\sigma \eta\delta\sigma u_i -\gamma \eta\nabla \delta\sigma\cdot v_i u_i + 2\eta\sigma  L (\delta u_i ), \end{multline} and we can calculate 
$$ L (\delta u_i) = \nabla \delta\gamma\cdot v_i \sqrt{\frac{\sigma }{\gamma }} u_i +\delta\gamma{\sigma \over{\gamma }} u_i -\delta\sigma u_i $$
which yields
\begin{eqnarray} L(\nabla \delta u_i\cdot v_i  ) & = & \nabla L(\delta u_i) \cdot v_i \nonumber \\
&=&\nabla  ( \nabla \delta\gamma\cdot v_i)\cdot v_i\sqrt{\frac{\sigma }{\gamma }} u_i+ 2 \nabla \delta\gamma\cdot v_i {\sigma \over{\gamma  }} u_i +
\delta\gamma {\sigma \over{\gamma  }}\sqrt{\frac{\sigma }{\gamma }} u_i -\delta\sigma \sqrt{\frac{\sigma }{\gamma }} u_i - \nabla\delta\sigma\cdot v_i u_i.  \nonumber \end{eqnarray}
We obtain
\begin{equation}
{1\over{ \sigma  u_i} } L {\delta H_{i}\over{u_i} }  = C_{i} \delta \gamma + B_{i} \delta\sigma \nonumber \end{equation}
where $C_i$ and $B_i$  are the second order constant coefficient differential operators
$$C_i = -\Delta +2\partial_{v_i}^2 + (3+2\eta)\sqrt{\frac{\sigma }{\gamma }}\partial_{v_i} + 2(1+\eta){\sigma \over{\gamma }} $$
and
$$B_i = -\eta{\gamma \over{\sigma }}\Delta -(2+\eta)\sqrt{\frac{\gamma }{\sigma }}\partial_{v_i} -2(1+\eta).$$
Define the data as
\begin{equation} \label{eq:constS} \tilde{S}_i= {\tilde L} \delta H_i = \left({1\over{ \sigma  u_i} } L{1\over{u_i} }\right) \delta H_i \end{equation}
and define the operator
\begin{equation} {\label{eq:constgamma} }
 \tilde{A}_J =  \left(\begin{matrix} C_{i} & B_{i}  \end{matrix}\right)
\end{equation}
with as many rows as we have $i$'s.  This is a constant coefficient elliptic PDE system, but due to the possible presence of eigenvalues, Dirichlet conditions are not obviously enough to guarantee injectivity. To resolve this, we again apply the technique described in Section \ref{sec:injectivity}. That is, we make the problem square by applying the adjoint. In this case, however, the result is a $2\times2$ fourth order system of differential equations
\begin{displaymath}
\tilde{A}_J^* \tilde{A}_J \left(\begin{matrix} \delta \gamma \\ \delta\sigma  \end{matrix}\right) = \tilde{A}_J^* \tilde{S}.
\end{displaymath}
More precisely, we have
\begin{displaymath}
\tilde{A}_J^*\tilde{A}_J = \left(\begin{matrix} \dsum_{i} C^*_{i}C_{i} & \dsum C^*_{i}B_{i}  \\ \dsum_{i} B^*_{i} C_{i} & \dsum_{i} B^*_{i} B_{i} \end{matrix}\right).
\end{displaymath}
Since $\tilde{A}_J^*\tilde{A}_J$ is elliptic in the sense of Douglis-Nirenberg, the system will admit a parametrix if we apply the Dirichlet conditions $w=0$ and $\partial_\nu w=0$ on $\partial X$ . Again by using Holmgren, we have that the operator $\tilde{A}_J^*\tilde{A}_J$ is actually injective with these conditions. Indeed, let us assume that $\tilde{A}_J^*\tilde{A}_J w=0$ in $X$ and $w=0$ and $\partial_\nu w=0$ on $\partial X$ . Then integrating by parts, we obtain that $\tilde{A}_J w=0$ in $X$. Since $\tilde{A}_J $ is elliptic, this implies that $\partial_\nu^2 w=0$ so that all second-order derivatives of $w$ vanish. Upon differentiating the equation for $w$, we obtain that all derivatives of order less than or equal to $3$ vanish on $\partial X$ (as well as higher-order derivatives in fact). From Holmgren we deduce that $w\equiv0$ in the vicinity of $\partial X$ and by induction in the whole domain $X$. So, the leading order operators are invertible stably for constant ($\gamma $, $\sigma $), and by continuity for background coefficients which are near to constant. We have proven the following theorem.
\begin{theorem}  Suppose the background coefficients  $(\gamma ,\sigma )$ are constant and the domain $X\subset\mathbb{R}^n$. Then if one applies data $f_i$ corresponding to  $n+1$ background solutions  $$ u_i=e^{\sqrt{\frac{\sigma }{\gamma }} x\cdot v_i }$$ for the $\{ v_i\}$ any set of unit vectors for which 
\begin{equation} \label{squareeqs2}  (\hat{\xi} \cdot v_i)^2=(\hat{\xi} \cdot v_j)^2 \quad \forall (i,j)  \end{equation}
has no solutions $\hat{\xi}\in \mathbb{S}^{n-1}$, then the linearized system for $ w=(\delta\gamma, \ \delta\sigma )^t $ 
$$\tilde{A}^*_J \tilde{A}_J w = \tilde{A}_J^*\tilde{S} \ \mbox{in} \ \ X$$
$$ w=0\ \ {\partial w\over{\partial \nu}}= 0 \mbox{ on } \ \partial X $$
for $\tilde{A}_J $ and $\tilde{S}$ given by (\ref{eq:constgamma}), (\ref{eq:constS}) is elliptic and has a unique solution $w$ satisfying 
\begin{equation}
\label{eq:regulwconst}
  \|w\|_{H^{s}(X;\Rm^2)}  \leq C  \|dH\|_{H^{s}(X;\Rm^J)}  . 
\end{equation}
\end{theorem}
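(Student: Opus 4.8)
The plan is to reduce everything to two facts about the reduced, constant-coefficient operator $\tilde A_J$ of \eqref{eq:constgamma}: that it is elliptic, and that the square fourth-order operator $\tilde A_J^*\tilde A_J$ is injective under the boundary conditions $w=\partial_\nu w=0$. Granting these, the estimate \eqref{eq:regulwconst} is a direct application of the elliptic regularity theory of \cite{ADN-CPAM-64,S-JSM-73}. First I would record that for constant $(\gamma,\sigma)$ the exact solutions $u_i=e^{\sqrt{\sigma/\gamma}\,x\cdot v_i}$ give $\theta_i=v_i$ and $d_i^2=\gamma/\sigma$ independent of $i$, so the vanishing of the $2\times2$ minors of \eqref{eq:hpJ} — which is the ellipticity criterion \eqref{eq:ellcond} for $P_J$ — collapses, upon dividing by the common $d_i^2$ and using $|\theta_i|=1$, to $(\hat\xi\cdot v_i)^2=(\hat\xi\cdot v_j)^2$; this is exactly hypothesis \eqref{squareeqs2}, so under that hypothesis $P_J$ is elliptic. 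Next I would carry out the elimination of the $\delta u_i$ exactly as in the computation preceding the theorem: use $\nabla u_i=\sqrt{\sigma/\gamma}\,u_i v_i$ to rewrite \eqref{eq:dHj} for $u_i^{-1}\delta H_i$, apply $L$, and substitute the explicit formula for $L\delta u_i$ to reach the scalar identities $\sigma^{-1}u_i^{-1}L(u_i^{-1}\delta H_i)=C_i\delta\gamma+B_i\delta\sigma$ with the constant-coefficient second-order operators $C_i,B_i$ displayed above. Their principal symbols are $c_i(\xi)=|\xi|^2-2(v_i\cdot\xi)^2$ and $b_i(\xi)=\eta\gamma\sigma^{-1}|\xi|^2$; since $b_i$ is a fixed nonzero multiple of $|\xi|^2$ and independent of $i$, the $J\times2$ symbol of $\tilde A_J$ has rank $2$ at every $\hat\xi\in\Sm^{n-1}$ precisely when the numbers $c_i(\hat\xi)$ are not all equal, i.e. precisely under \eqref{squareeqs2}. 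Hence $\tilde A_J$ is elliptic, and therefore the Gram operator $\tilde A_J^*\tilde A_J$ is a $2\times2$ fourth-order system with positive-definite principal symbol for which the Dirichlet conditions $w=\partial_\nu w=0$ satisfy the Lopatinskii condition; \cite{ADN-CPAM-64,S-JSM-73} then supply a parametrix and the a priori bound $\|w\|_{H^s(X;\Rm^2)}\le C\|\tilde A_J^*\tilde A_J w\|_{H^{s-4}(X)}+C_2\|w\|_{L^2(X)}$.

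To remove the $C_2$ term I would prove injectivity. Suppose $\tilde A_J^*\tilde A_J w=0$ in $X$ with $w=\partial_\nu w=0$ on $\partial X$; integrating by parts, the boundary terms all carry a factor of $w$ or $\partial_\nu w$ and hence vanish, so $\int_X|\tilde A_J w|^2=0$ and $\tilde A_J w=0$ in $X$, i.e. $C_i\delta\gamma+B_i\delta\sigma=0$ for every $i$. Restricting these equations to $\partial X$ and noting that all tangential derivatives of $w$ and the normal derivative $\partial_\nu w$ vanish there, the only surviving terms are $c_i(\nu)\,\partial_\nu^2\delta\gamma+b_i(\nu)\,\partial_\nu^2\delta\sigma=0$; by ellipticity (take $\hat\xi=\nu$) this forces $\partial_\nu^2\delta\gamma=\partial_\nu^2\delta\sigma=0$, so all second-order derivatives of $w$ vanish on $\partial X$. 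Differentiating $\tilde A_J w=0$ normally and iterating, one gets that every derivative of $w$ vanishes on $\partial X$. Since $\tilde A_J^*\tilde A_J$ has constant (hence real-analytic) coefficients and is elliptic, $w$ is real-analytic in $X$, and a connected domain with Cauchy data vanishing to infinite order on $\partial X$ forces $w\equiv0$ (equivalently, invoke Holmgren's uniqueness theorem for the analytic elliptic system $\tilde A_J w=0$ and continue across $X$ as in Theorem \ref{thm:holmgren}). Thus $\tilde A_J^*\tilde A_J$ is injective on $H^2_0$, and by the standard compactness argument the bound improves to $\|w\|_{H^s(X;\Rm^2)}\le C\|\tilde A_J^*\tilde A_J w\|_{H^{s-4}(X)}$.

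Finally I would apply this to $\tilde A_J^*\tilde A_J w=\tilde A_J^*\tilde S$ with $\tilde S_i=\sigma^{-1}u_i^{-1}L(u_i^{-1}\delta H_i)$: the map $\delta H\mapsto\tilde S$ is second order and $\tilde A_J^*$ is second order, so $\tilde A_J^*\tilde S$ involves at most four derivatives of $\delta H$; hence $\delta H\in H^{s+4}$ gives $w\in H^{s+4}$ with $\|w\|_{H^{s+4}}\le C\|\delta H\|_{H^{s+4}}$, which is \eqref{eq:regulwconst} after relabeling $s$. The persistence for backgrounds close to constant follows by a perturbation argument as in Theorem \ref{thm:geninj}: ellipticity is an open condition, and the norm of $(\tilde A_J^*\tilde A_J)^{-1}$ (with the above boundary conditions) controls the size of the coefficient perturbation for which injectivity survives. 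I expect the genuine work to lie in the injectivity step — specifically in verifying that the vanishing Cauchy data propagates through the elliptic equation $\tilde A_J w=0$ to the vanishing of all higher normal derivatives on $\partial X$, so that Holmgren (or analytic hypoellipticity plus connectedness) applies — together with the bookkeeping showing the reduced symbol of $\tilde A_J$ inherits the ellipticity condition \eqref{eq:ellcond} of $P_J$; the regularity estimate itself is quoted verbatim from \cite{ADN-CPAM-64,S-JSM-73}.
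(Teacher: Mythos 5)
Your proposal follows the same route as the paper: reduce to the second-order system $\tilde A_J(\delta\gamma,\delta\sigma)^t=\tilde S$ by eliminating $\delta u_i$, observe that its principal symbol has rank $2$ precisely under hypothesis \eqref{squareeqs2} since $\theta_i=v_i$ and $d_i^2=\gamma/\sigma$ collapse \eqref{eq:ellcond} to $(\hat\xi\cdot v_i)^2=(\hat\xi\cdot v_j)^2$, form the fourth-order Gram system $\tilde A_J^*\tilde A_J$ with Dirichlet data $w=\partial_\nu w=0$, integrate by parts to conclude $\tilde A_J w=0$, use ellipticity at $\hat\xi=\nu$ to propagate vanishing of all derivatives on $\partial X$, and finish with Holmgren, after which \cite{ADN-CPAM-64,S-JSM-73} give the estimate with $C_2=0$. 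This is exactly the paper's argument; your added detail on the boundary restriction (identifying the surviving terms as $c_i(\nu)\partial_\nu^2\delta\gamma+b_i(\nu)\partial_\nu^2\delta\sigma$) and on the symbol computation for $\tilde A_J$ simply makes explicit what the paper states more tersely.
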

\noindent
{\bf Remark.} Note that in the above analysis we did not explicitly cover when $\sigma =0$. This 
case is actually simpler than the above, and illuminates very clearly why more than two solutions are necessary for ellipticity. 
We can of course choose the background solutions $u_0=1$ and $u_j=x_j$ for $1\leq j\leq n$. We then obtain that 
\begin{displaymath}
  dH_{0} = \eta \delta \sigma.
\end{displaymath}
This is easy enough to invert for $\delta\sigma$, which we eliminate from the system of equations and then focus on $\delta\gamma$. After elimination, we  find that
\begin{displaymath}
   \Delta d\tilde H_{i} \delta\gamma = \big(\Delta - 2\partial^2_{x_i})\delta\gamma
\end{displaymath}
which is a hyperbolic operator (in fact it is precisely the wave operator where $x_i$ acts as time). So, if we were only to use one $u_i$,  (in addition to $u_0$), it would not be possible to invert for $\delta\gamma$ unless the geometry were quite specific.  Additionally, stability is lost since we have taken the Laplacian of the data. However, we will see that by taking more background solutions we regain ellipticity and stability. 
If $n\geq 3$, consider the $n$ background solutions $\{ u_i \} $ together.  We in fact have that
\begin{displaymath}
\dsum_{i=1}^n \Delta d\tilde H_{i} = (n-2)\Delta,
\end{displaymath}
which is clearly elliptic, and can be inverted to find $\delta\gamma$ if we impose Dirichlet boundary conditions. Therefore, we can obtain the stable reconstruction of $(\delta\gamma,\delta\sigma)$ by inverting a second order system of equations. 

In dimension $n=2$, the measurements $H_{1}$ and $H_{2}$ alone are not quite enough since $$\Delta d\tilde H_{1}=-\Delta d\tilde H_{2}.$$ We need the contribution 
$$H_{12}= \nabla u_1\cdot \nabla u_2 $$
which can be obtained by the background solution $ u_3= u_1+u_2$, and yields 
$$ \Delta d\tilde H_{12} = -2\partial_{x_1 x_2}.$$ 
We observe that we now want to solve the $2\times 1$ system  $$\tilde{A}_J \delta\gamma := \left( \begin{array}{c}   \partial^2_{x_2} - \partial^2_{x_1}\\ -2\partial_{x_1 x_2} \end{array} \right) \delta\gamma = \left( \begin{array}{c} \Delta d\tilde H_{1} \delta\gamma \\
\Delta d\tilde H_{12} \delta\gamma \end{array} \right)  . $$
One can approach this in the same manner as above and apply $\tilde{A}_J^*$ to both sides. This yields the simple bi-Laplace equation
$$\Delta \Delta \delta\gamma = \tilde{A}_J^*\left( \begin{array}{c} \Delta d\tilde H_{11} \delta\gamma \\
\Delta d\tilde H_{12} \delta\gamma \end{array} \right), $$
which we can clearly invert stably if we impose Cauchy data $\delta\gamma=0, {\partial \delta\gamma\over{\partial \nu}}=0$ on $\partial X$. Again we have converted the system to one that is square with the cost of having to go to fourth order equations and requiring more boundary data. Note that by continuity, the leading operators will also be invertible for 
$(\gamma ,\sigma  )$ in the vicinity of $( 1, 0)$ (and indeed in the vicinity of any positive constants).

%

\section*{Acknowledgments}
The authors would like to thank John Schotland for many stimulating discussions on the problem of ultrasound modulated optical tomography.

\end{document}